\newtheorem{theo}{Theorem}[section]
\newtheorem{definition}[theo]{Definition}
\newtheorem{example}[theo]{Example}
\newtheorem{lem}[theo]{Lemma}
\newtheorem{propos}[theo]{Proposition}
\newtheorem{corollary}[theo]{Corollary}
\newtheorem{remark}[theo]{Remark}
\newcommand{\R}{{\mathbb{R}}}
\newcommand{\N}{{\mathbb{N}}}
\newcommand{\Z}{{\mathbb{Z}}}
\newcommand{\loglike}[1]{\mathop{\rm #1}\nolimits}
\newif\ifshort
\newcommand{\spn}{\loglike{Lin}}
\newcommand{\cspn}{\overline{\loglike{Lin}}}
\newcommand{\vertiii}[1]{{\left\vert\kern-0.25ex\left\vert\kern-0.25ex\left\vert #1
    \right\vert\kern-0.25ex\right\vert\kern-0.25ex\right\vert}}
    \newcommand{\vertiiis}[1]{{\vert\kern-0.25ex\vert\kern-0.25ex\vert #1
    \vert\kern-0.25ex\vert\kern-0.25ex\vert}}
\newcommand{\scal}[1]{{\left\langle\kern-0.25ex\left\langle #1
    \right\rangle\kern-0.25ex\right\rangle}}
\newcommand{\bea}{\begin{eqnarray*}}
\newcommand{\eea}{\end{eqnarray*}}
\newcommand{\beq}{\begin{eqnarray}}
\newcommand{\eeq}{\end{eqnarray}}
 \renewcommand{\le}{\leqslant}
\renewcommand{\leq}{\leqslant}
\renewcommand{\ge}{\geqslant}
\renewcommand{\geq}{\geqslant}
\numberwithin{equation}{section}
\begin{document}
\title[Linear Plasticity of Ellipsoids]{Linear Expand-Contract Plasticity of Ellipsoids Revisited}
\author[I.~Karpenko]{Iryna Karpenko}
\address{B.~Verkin Institute for Low Temperature Physics and Engineering,
47 Nauky Avenue, 61103 Kharkiv, Ukraine and
Universität Wien,
Oskar-Morgenstern-Platz 1
1090 Wien, Austria}
\email{iryna.karpenko@univie.ac.at}

\author{Olesia Zavarzina}
\address{Department of Mathematics and Informatics, V. N. Karazin Kharkiv National
University, 61022 Kharkiv, Ukraine.}
\email{olesia.zavarzina@yahoo.com}

\begin{abstract} This work is aimed to describe linear expand-contract plastic ellipsoids given via quadratic form of a bounded positively defined self-adjoint operator in terms of its spectrum.

\end{abstract}
\subjclass[2020]{46B20, 54E15}
\keywords{non-expansive map; ellipsoid; linearly expand-contract plastic space}
\maketitle

\section{Introduction}
Let $M$ be a metric space and $F\colon M\to M$ be a map. $F$ is called \emph{non-expansive} if it does not increase distance between points of the space $M$. $M$ is called \emph{expand-contract plastic} (or just plastic for short) if every non-expansive bijection $F\colon M \to M$ is an isometry.

There is a number of relatively recent publications devoted to plasticity of the unit balls of Banach spaces (see \cite{AnKaZa, CKOW2016, KZ, KZ2017, Zav}).
Here we give only one theorem which is a simple consequence of Theorem 1 in \cite{NaiPioWing} or Theorem 3.8 in \cite{Zav}.
\begin{theo}
Let $X$ be a finite-dimensional Banach space. Then $B_X$ is plastic.
\end{theo}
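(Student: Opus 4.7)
The plan is to reduce the theorem to a classical general result: \emph{every non-expansive bijection of a totally bounded metric space onto itself is an isometry} (sometimes credited to Freudenthal and Hurewicz, and essentially the content of Theorem~1 in \cite{NaiPioWing} and Theorem~3.8 in \cite{Zav}). Since $X$ is finite-dimensional, its closed unit ball $B_X$ is compact by Heine--Borel, hence totally bounded, so the theorem follows at once from this general fact.

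To establish the general fact, I fix a non-expansive bijection $F\colon M\to M$ on a totally bounded metric space $M$ and aim to show $d(F(x), F(y)) = d(x, y)$ for arbitrary $x, y\in M$. Using the surjectivity of $F$, I would first build backward preimage orbits $x = x_0, x_1, x_2, \ldots$ and $y = y_0, y_1, y_2, \ldots$ satisfying $F(x_{n+1}) = x_n$ and $F(y_{n+1}) = y_n$. Applied to the step from $(x_{n+1}, y_{n+1})$ to $(x_n, y_n)$, non-expansivity gives $d(x_n, y_n) \le d(x_{n+1}, y_{n+1})$, so the scalar sequence $\bigl(d(x_n, y_n)\bigr)_{n\ge 0}$ is \emph{non-decreasing}.

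Next I would exploit total boundedness of $M$ to find, for any $\eps > 0$, indices $m < k$ with $k-m$ arbitrarily large such that $d(x_m, x_k) < \eps$ and $d(y_m, y_k) < \eps$ simultaneously (pigeonhole on a finite $\eps$-net applied to the sequence $(x_n, y_n)$). Applying the non-expansive map $F^m$ and noting $F^m(x_k) = x_{k-m}$ and $F^m(y_k) = y_{k-m}$, I obtain $d(x_0, x_{k-m}) < \eps$ and $d(y_0, y_{k-m}) < \eps$. The triangle inequality then gives $d(x_{k-m}, y_{k-m}) \le d(x_0, y_0) + 2\eps$; combined with the non-decreasing property of $d(x_n,y_n)$ this forces the entire sequence to be constant, equal to $d(x_0,y_0)$. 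In particular $d(x_1, y_1) = d(x_0, y_0)$, i.e.\ $d(F(x_1), F(y_1)) = d(x_1, y_1)$. Since any pair $(u, v) \in M \times M$ arises this way (take $x_0 = F(u)$, $y_0 = F(v)$, $x_1 = u$, $y_1 = v$), this shows that $F$ is an isometry.

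The main obstacle I anticipate is threading surjectivity through the argument in the correct direction. Without surjectivity the statement fails (a constant map is non-expansive but not an isometry), so the proof cannot use forward orbits, where non-expansivity would only give \emph{non-increasing} distances and no useful return information. The trick is to reverse the monotonicity by moving to preimages, and then combine this with the pigeonhole returns coming from total boundedness; once these two ingredients are in place the conclusion reduces to a short triangle-inequality computation.
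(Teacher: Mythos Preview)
Your proposal is correct and follows exactly the route the paper indicates: the paper does not give its own argument but simply records the theorem as ``a simple consequence of Theorem~1 in \cite{NaiPioWing} or Theorem~3.8 in \cite{Zav}'', which is precisely the totally-bounded/compact plasticity result you invoke and then prove. Your write-up therefore subsumes the paper's treatment by supplying the Freudenthal--Hurewicz style argument behind the citation; one minor remark is that the clause ``with $k-m$ arbitrarily large'' is not actually needed---any $m<k$ already gives $d(x_1,y_1)\le d(x_{k-m},y_{k-m})\le d(x_0,y_0)+2\eps$, and letting $\eps\to 0$ finishes.
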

However, the question about plasticity of the unit ball of an arbitrary infinite-dimensional Banach space is open. At least, there are no counterexamples. On the other hand, an example of non-plastic ellipsoid in separable Hilbert space was built in \cite{CKOW2016}. In \cite{Zav1}, this example was generalized and the following definition was introduced.

\begin{definition}
 \emph{Let $M$ be a subset of a normed space $X$. We say that $M$ is} linearly expand-contract plastic \emph{(briefly an LEC-plastic) if every linear operator $T: X \to X$ whose restriction on $M$ is a non-expasive bijection from $M$ onto $M$ is an isometry on $M$}.
 \end{definition}

In the mentioned article \cite{Zav1} the ellipsoids of the following form were considered
  \[
  E =\left\{x = \sum_{n \in \N}x_n e_n  \in H: \sum_{n \in \N}\left|\frac{x_n}{a(n)}\right|^2 \le 1 \right\},
 \]
where $H$ is a separable Hilbert space with basis $\{e_n\}_1^\infty$ and $a(n)>0$. There was given a description of the LEC-plastic ellipsoids of such form. 

In what follows, we use the notations from \cite{Kad}. The letter $H$ denotes a fixed separable infinite-dimensional Hilbert space (real or complex), the symbol $\left\langle x, y \right\rangle$ stays for the scalar product of elements $x, y \in H$. We use the symbol $\spn$ to denote the linear span, and the symbol $\cspn$ to denote the closed linear span.

In the present paper, we will consider a more general definition of an ellipsoid, namely, 

\begin{definition} An ellipsoid in $H$ is a set of the form
 $$
  E =\left\{x \in H: \left\langle x, Ax \right\rangle \le 1 \right\},
 $$
where $A$ is a self-adjoint operator such that  $\inf_{x\in S_{H}} \left\langle Ax,x\right\rangle >0$ and $\sup_{x\in S_{H}} \left\langle Ax,x\right\rangle < \infty$.
\end{definition}

We will denote the boundary of $E$ by $$S =\left\{x \in H: \left\langle x, Ax \right\rangle = 1 \right\}.$$ 

In what follows, $\sigma(A)$ will stand for the spectrum of $A$. Note that in this case $\sigma(A)$ is bounded from below and above by some positive constants.

In this paper we will show, that in fact the description of LEC-plastic ellipsoids in \cite{Zav1} was complete. In other words, there is no other LEC-plastic ellipsoids, except for those already described.

 \section{Basic facts}

For our purpose, we will need some results related to measure theory  (see, e.g., \cite{Kad}, \cite{Teschl_RA},
\cite{MES}). Let us collect these results. 

Recall that a distribution function of a given positive finite Borel measure on the real numbers $\mu$ is given by
\[F_{\mu}(t)=\mu([0,t]).\]
Notice that this function is non-decreasing, and hence its generalized inverse $F^{-1}_{\mu}(t)=\sup\{x: F_\mu(x)\leq t\}$ is well-defined and also non-decreasing. 

Notice that any normalized atomless Borel measure $\mu$ can be mapped into Lebesgue measure $\lambda$ on $[0,1]$ (Indeed, performing a simple computation, we obtain $\mu([0,F^{-1}_{\mu}([0,t])])=t=\lambda ([0,t])$, $t\in[0,1]$). Hence, we get the following theorem.

\begin{theo}\label{thm:1} (see: \cite{MES}  or Theorem 9.2.2 in \cite{Bogachev})
All atomless standard probability spaces are mutually almost isomorphic.
\end{theo}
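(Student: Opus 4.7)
The plan is to establish the theorem by showing that every atomless standard probability space is almost isomorphic (i.e., isomorphic modulo null sets) to the unit interval $[0,1]$ equipped with Lebesgue measure $\lambda$. Since almost isomorphism is transitive, this immediately yields that any two atomless standard probability spaces are mutually almost isomorphic.

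Given an atomless standard probability space $(X, \Sigma, \mu)$, I would first reduce to the case in which $X$ is a Borel subset of $\R$ and $\mu$ is an atomless Borel probability measure on $X$. This uses the Borel isomorphism theorem: any two uncountable standard Borel spaces are Borel-isomorphic, so $\mu$ can be transported to a Borel probability measure on $\R$, and then by composing with a Borel homeomorphism we may assume the measure is supported on $[0,1]$. Atomlessness is preserved because a Borel isomorphism sends singletons to singletons.

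Following the hint in the text, I would then construct the almost isomorphism via the distribution function. Define $F_\mu(t) = \mu([0,t])$, which is continuous (because $\mu$ is atomless) and non-decreasing, with $F_\mu(0)=0$ and $F_\mu(1)=1$. Its generalized inverse $F_\mu^{-1}(s) = \sup\{x : F_\mu(x) \leq s\}$ is also non-decreasing, and the computation already displayed in the excerpt,
\[
\mu([0, F_\mu^{-1}(s)]) = s = \lambda([0,s]),
\]
shows that $F_\mu^{-1}$ pushes $\lambda$ forward to $\mu$. To promote this to a genuine bijection I would remove null sets: let $N_1 \subset [0,1]$ be the union of all maximal open intervals on which $F_\mu$ is constant (a countable union, of total $\mu$-measure zero), and let $N_2 \subset [0,1]$ be the set of discontinuities of $F_\mu^{-1}$ (countable, hence $\lambda$-null). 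On the complements $[0,1]\setminus N_1$ and $[0,1]\setminus N_2$, the maps $F_\mu$ and $F_\mu^{-1}$ become strictly increasing, continuous, mutually inverse Borel maps intertwining $\mu$ and $\lambda$.

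The main technical point is the bookkeeping of null sets: one has to check that excising the "flat parts" of $F_\mu$ from the domain and the jumps of $F_\mu^{-1}$ from the codomain really does yield a bijection whose inverse is also Borel measurable and measure-preserving. This is routine once one notes that $F_\mu^{-1}$ has countably many jumps (one for each maximal flat interval of $F_\mu$) and that $F_\mu$ restricted to the complement of $N_1$ is strictly increasing. Having produced an almost isomorphism with $([0,1],\lambda)$ for an arbitrary atomless standard probability space, transitivity closes the argument; alternatively, one may simply invoke Theorem 9.2.2 in \cite{Bogachev} or the reference \cite{MES} for the packaged statement.
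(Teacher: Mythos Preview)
Your proposal is correct and follows essentially the same route the paper sketches: the paper treats this theorem as a cited result and only offers the one-line hint that any normalized atomless Borel measure is carried to Lebesgue measure via the distribution function (the computation $\mu([0,F_\mu^{-1}(t)])=t=\lambda([0,t])$), which is exactly the mechanism you develop in detail. Your added reduction via the Borel isomorphism theorem and the explicit excision of the flat-interval and jump null sets are the natural way to fill in what the paper leaves to the references \cite{MES} and \cite{Bogachev}.
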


\begin{corollary}\label{Cor}
Let $\mu$ and $\nu$ be (finite and compactly supported) atomless Borel measures on $\mathbb{R}$ with $M_\nu=\nu(\mathbb{R})$ and $M_\mu=\mu(\mathbb{R})$. Then there exists a map $G_{\mu,\nu}$ such that $\nu = \frac{M_\nu}{M_\mu} \mu \circ G_{\mu,\nu}$.
\end{corollary}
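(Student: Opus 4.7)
The plan is to reduce the statement to Theorem \ref{thm:1} by normalization. Set $\tilde{\mu}=M_\mu^{-1}\mu$ and $\tilde{\nu}=M_\nu^{-1}\nu$; these are atomless Borel probability measures with compact support on $\mathbb{R}$, hence atomless standard probability spaces in the sense of Theorem \ref{thm:1}. Applying the theorem produces an (almost everywhere defined, measurable) isomorphism $G_{\mu,\nu}$ between the two measure spaces satisfying $\tilde{\nu}=\tilde{\mu}\circ G_{\mu,\nu}$.

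Concretely, I would construct $G_{\mu,\nu}$ by composing the two distribution-function maps sketched just before Theorem \ref{thm:1}. First apply $F_{\tilde{\nu}}\colon\mathbb{R}\to[0,1]$, which pushes $\tilde{\nu}$ to Lebesgue measure $\lambda$ on $[0,1]$; then apply the generalized inverse $F_{\tilde{\mu}}^{-1}\colon[0,1]\to\mathbb{R}$, which sends $\lambda$ back to $\tilde{\mu}$ according to the computation $\mu([0,F^{-1}_\mu([0,t])])=t$ displayed in the excerpt. Setting $G_{\mu,\nu}=F_{\tilde{\mu}}^{-1}\circ F_{\tilde{\nu}}$ and chaining these two identities yields $\tilde{\mu}\circ G_{\mu,\nu}=\tilde{\nu}$ on Borel sets. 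Multiplying through by $M_\nu$ gives
\[
\nu=M_\nu\tilde{\nu}=M_\nu\,\tilde{\mu}\circ G_{\mu,\nu}=\frac{M_\nu}{M_\mu}\,\mu\circ G_{\mu,\nu},
\]
as claimed.

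The only delicate point, which I view as the main obstacle, is verifying the set-theoretic identity $\mu(G_{\mu,\nu}(B))=(M_\mu/M_\nu)\nu(B)$ for \emph{all} Borel $B$, since the generalized inverse $F_{\tilde{\mu}}^{-1}$ is in general only a measurable bijection modulo a null set (it is constant on intervals where $F_{\tilde{\mu}}$ fails to be strictly increasing, and undefined where $F_{\tilde{\mu}}$ jumps). The atomless hypotheses on both $\mu$ and $\nu$ are precisely what rule out the bad behaviour: atomlessness makes $F_{\tilde{\mu}}$ and $F_{\tilde{\nu}}$ continuous, and after discarding the countable collection of maximal plateau intervals (which carry no $\mu$-mass) the composition becomes a genuine measure-preserving bijection, exactly the content of Theorem \ref{thm:1}.
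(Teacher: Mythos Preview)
Your proposal is correct and follows exactly the route the paper intends: the corollary is stated without a separate proof, as an immediate consequence of Theorem \ref{thm:1} via normalization, and the subsequent Remark records precisely your explicit formula $G_{\mu,\nu}=F_{\mu}^{-1}\circ\frac{M_\mu}{M_\nu}F_{\nu}$ (which agrees with your $F_{\tilde\mu}^{-1}\circ F_{\tilde\nu}$ after unwinding the normalizations). Your discussion of the null-set issues is more careful than the paper's, but the underlying argument is the same.
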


\begin{remark} Observe that 
\begin{enumerate}
    \item  $G_{\mu,\nu}$ can be written explicitly in terms of corresponding distribution functions, namely, $G_{\mu,\nu}=F^{-1}_{\mu}\circ \frac{M_\mu}{M_\nu}F_{\nu}$.
    \item $G_{\mu,\nu}: supp(\nu)\longrightarrow supp(\mu)$. 
\end{enumerate}
\end{remark}

The next theorem can be found in \cite{Teschl_RA}. 

\begin{theo}\label{thm:2} (see: Theorem 2.16 in \cite{Teschl_RA}) Let $\mu$ be a given measure on $X$, $f:X\longrightarrow Y$ be measurable w.r.t. $\mu$ and $f_*\mu(A)$ be a measure on Y defined by $f_*\mu(A)=\mu(f^{-1}(A))$. Let $g:Y\longrightarrow \mathbb{C}$ be a Borel function. Then the function $g\circ f:X\longrightarrow \mathbb{C}$ is integrable w.r.t. $\mu$ if and only if $g$ is integrable w.r.t $f_*\mu$. Moreover
\[
\int_Y g d(f_*\mu)=\int_Y g\circ f d\mu.
\]
\end{theo}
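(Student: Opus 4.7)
The plan is to deploy the standard measure-theoretic bootstrap (the ``standard machine''): verify the identity first for characteristic functions, extend by linearity to simple functions, pass to nonnegative Borel functions via monotone convergence, and finally reduce the general complex-valued case to the nonnegative one by decomposing into real/imaginary and positive/negative parts.

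For the base case, take $g=\1_A$ with $A\subseteq Y$ Borel. Then $g\circ f=\1_{f^{-1}(A)}$, so directly from the definition of the pushforward,
$$\int_Y \1_A\, d(f_*\mu) \;=\; (f_*\mu)(A) \;=\; \mu(f^{-1}(A)) \;=\; \int_X \1_{f^{-1}(A)}\, d\mu.$$
Linearity of the integral immediately upgrades this to all nonnegative simple Borel functions $g=\sum_{k=1}^{n} c_k \1_{A_k}$. For an arbitrary nonnegative Borel $g$, choose an increasing sequence of nonnegative simple functions $g_n\uparrow g$; then $g_n\circ f\uparrow g\circ f$ pointwise on $X$, and the monotone convergence theorem applied on each side yields
$$\int_Y g\, d(f_*\mu) \;=\; \lim_{n\to\infty}\int_Y g_n\, d(f_*\mu) \;=\; \lim_{n\to\infty}\int_X g_n\circ f\, d\mu \;=\; \int_X g\circ f\, d\mu,$$
both sides being permitted to equal $+\infty$.

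Applying this identity with $|g|$ in place of $g$ yields the equivalence of integrability: $g\in L^{1}(f_*\mu)$ if and only if $g\circ f\in L^{1}(\mu)$. Under this hypothesis, decompose a real-valued $g$ as $g^{+}-g^{-}$, and a complex-valued one further as $\Re g+i\,\Im g$ with each real piece split into its positive and negative parts; the identity passes through each of the four nonnegative integrals and recombines by linearity. The only measurability point is that $g\circ f$ is $\mu$-measurable whenever $g$ is Borel, which is immediate from measurability of $f$. I do not foresee any essential obstacle here: the argument is the routine unfolding of the standard machine, with the only mild care required in handling the possibility of infinite intermediate integrals when passing to the monotone limit and in lining up the integrability equivalence before splitting into signed parts.
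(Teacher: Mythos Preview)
Your argument is correct: this is exactly the standard machine (characteristic functions $\to$ simple functions $\to$ nonnegative Borel functions via monotone convergence $\to$ complex-valued via decomposition), and each step goes through as you describe. Note, however, that the paper does not give its own proof of this statement; it is quoted as a background fact with a reference to Teschl's textbook, so there is nothing in the paper to compare against. Your write-up is the canonical proof one would find in that reference. (Incidentally, you have silently corrected a typo in the displayed formula: the domain of integration on the right should be $X$, not $Y$, as you wrote.)
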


Furthermore, we will need some results related to operator theory (see, e.g., \cite{ReedSimon}, \cite{Teschl}).

Let $A$ be a bounded self-adjoint operator on a separable Hilbert space $H$. 

Then we can introduce continuous functions of $A$, as it is shown in the following theorem:

\begin{theo}\label{Th_cont_func} (see: Theorem VII.1 in \cite{ReedSimon}, cf: Theorem 3.1. in \cite{Teschl})
Let $A$ be a bounded self-adjoint operator on a  Hilbert space $H$. Then there is a unique map $\phi_A:C(\sigma(A))\rightarrow \mathcal{L}(H)$ with the following properties:
\begin{enumerate}[(i)]
    \item $\phi_A(f g)=\phi_A(f)\phi_A(g)$, $\phi_A(\lambda f)=\lambda\phi_A(f)$,  $\phi_A(1)=I$, $\phi_A(\bar f)=\phi_A(f)^*$;
    \item $||\phi_A(f)||_{\mathcal{L}(H)}\leq C ||f||_\infty$;
    \item if $f(x)=x$, then $\phi_A(f)=A$.
\end{enumerate}
Moreover,
\begin{enumerate}[(iv)]
    \item if $A\psi=\lambda\psi$, than $\phi_A(f)\psi=f(\lambda)\psi$;
    \item $\sigma(\phi_A(f))=\{f(\lambda)|\lambda\in\sigma(A)\}$;
    \item if $f\geq 0$, then $\phi_A(f)\geq 0$;
    \item $||\phi_A(f)||=||f||_\infty$.
\end{enumerate}
Then we define $f(A):=\phi_A(f)$. 
\end{theo}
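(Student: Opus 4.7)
I would build the functional calculus in the standard three-stage manner: polynomials, extension by density, and verification of the residual properties.

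First, for a polynomial $p(x)=\sum_{k=0}^{n}a_k x^k$ I set $\phi_A(p):=\sum_{k=0}^{n}a_k A^k$, with $A^0:=I$. On polynomials, properties (i) and (iii) are tautological, while (iv) is immediate from $A^k\psi=\lam^k\psi$ whenever $A\psi=\lam\psi$. The crucial analytic step is the norm identity $\|\phi_A(p)\|=\sup_{\lam\in\sig(A)}|p(\lam)|$. This I would obtain via the $C^*$-identity and the spectral radius: since $\phi_A(p)^*\phi_A(p)=\phi_A(|p|^2)$ is self-adjoint, one has
\[
\|\phi_A(p)\|^2=\|\phi_A(|p|^2)\|=r(\phi_A(|p|^2))=\sup\sig(\phi_A(|p|^2)).
\]
Combined with the polynomial spectral mapping theorem $\sig(q(A))=q(\sig(A))$ (proved by factoring $q(x)-\mu=c\prod(x-\alpha_j)$ and observing that $q(A)-\mu I$ is invertible iff each $A-\alpha_j I$ is), this yields the desired equality.

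Next I would invoke the Weierstrass approximation theorem: polynomials are dense in $C(\sig(A))$ because $\sig(A)$ is a compact subset of $\R$. Since $p\mapsto\phi_A(p)$ is an isometry from the polynomials (with the sup norm on $\sig(A)$) into $\mathcal{L}(H)$, the bounded linear transformation theorem gives a unique isometric extension $\phi_A\colon C(\sig(A))\to\mathcal{L}(H)$. Property (vii) then holds by construction, and (ii) follows. Multiplicativity, linearity, unitality and the $*$-property (i) carry over from polynomials to continuous functions by approximating $f,g$ in the sup norm and using joint continuity of the algebra operations and of the involution on $\mathcal{L}(H)$. Uniqueness is forced by the conditions (i)+(iii), which already prescribe $\phi_A$ on polynomials, plus continuity together with density.

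For (iv), if $A\psi=\lam\psi$ and $p_n\to f$ uniformly on $\sig(A)$, then $p_n(A)\psi=p_n(\lam)\psi\to f(\lam)\psi$, while $p_n(A)\psi\to f(A)\psi$ in norm, so $f(A)\psi=f(\lam)\psi$. Property (vi) follows from $\sqrt{f}\in C(\sig(A))$ when $f\ge 0$, giving $\phi_A(f)=\phi_A(\sqrt{f})^*\phi_A(\sqrt{f})\ge 0$. The main obstacle is (v), the continuous spectral mapping theorem. I would prove both inclusions separately: if $\mu=f(\lam_0)$ with $\lam_0\in\sig(A)$, then approximating $f$ by polynomials and using the polynomial version together with norm continuity of the spectrum on normal operators shows $\mu\in\sig(f(A))$. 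Conversely, if $\mu\notin f(\sig(A))$, then $g(x):=(f(x)-\mu)^{-1}$ lies in $C(\sig(A))$ by compactness, and multiplicativity gives $\phi_A(g)(f(A)-\mu I)=I$, so $\mu\notin\sig(f(A))$.
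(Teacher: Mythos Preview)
The paper does not prove this theorem at all; it is quoted from the literature with explicit references to Reed--Simon (Theorem~VII.1) and Teschl (Theorem~3.1), and is used only as background for the spectral machinery. So there is no ``paper's own proof'' to compare against.

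That said, your outline is exactly the standard argument carried out in Reed--Simon: define $\phi_A$ on polynomials, establish the isometry $\|\phi_A(p)\|=\|p\|_{\infty,\sigma(A)}$ via the polynomial spectral mapping theorem and the $C^*$-identity, extend by Weierstrass density, and then read off the remaining properties. Your treatment of (i)--(iv), (vi), (vii) and of the inclusion $\sigma(f(A))\subset f(\sigma(A))$ in (v) is clean and correct.

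One remark on the reverse inclusion in (v): invoking ``norm continuity of the spectrum on normal operators'' (Newburgh's theorem) is valid but heavier than needed here. Since you have already proved the isometry $\|\phi_A(g)\|=\|g\|_\infty$, a more self-contained route is: given $\lambda_0\in\sigma(A)$, pick continuous bump functions $h_\varepsilon$ with $\|h_\varepsilon\|_\infty=1$ supported in an $\varepsilon$-neighbourhood of $\lambda_0$; then $\|\phi_A(h_\varepsilon)\|=1$ while $\|(f(A)-f(\lambda_0)I)\phi_A(h_\varepsilon)\|=\|(f-f(\lambda_0))h_\varepsilon\|_\infty\to 0$, so $f(A)-f(\lambda_0)I$ is not bounded below and $f(\lambda_0)\in\sigma(f(A))$. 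Either way, your proposal is correct.
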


For every $\psi\in H$ we can define a corresponding linear functional on $C(\sigma(A))$ mapping $f\rightarrow \left\langle\psi, f(A)\psi\ \right\rangle$. Then by Riesz theorem, there exist a unique measure $\mu_\psi$ on $\sigma(A)$ such that $\left\langle\psi, f(A)\psi\right\rangle=\int_{\sigma(A)}f(\lambda)d\mu_\psi$. The measure $\mu_\psi$ is called the spectral measure associated with the vector $\psi$.

The next important result (spectral theorem) tells that every bounded self-adjoin operator can be realized as multiplication operator on a suitable measure space.

\begin{theo}\label{Spectral_Th} (see: Theorem VII.3 in \cite{ReedSimon}, or Lemma 3.4 and  Theorem 3.6 in \cite{Teschl})
Let $A$ be a bounded self-adjoint operator on a separable Hilbert space $H$. Then, there exist measures $\{\mu_n\}_{n=1}^N (N\in \N \text{ or } N=\infty)$ on $\sigma(A)$ and a unitary operator 
$$U\colon H \to \bigoplus_{n=1}^{N}L^2(\R,d\mu_n)$$
so that
$$(UAU^{-1}\psi)_n(\lambda)=\lambda\psi_n(\lambda)$$
where we write an element $\psi\in \oplus_{n=1}^{N}L^2(\R,d\mu_n)$ as an $N$-tuple $(\psi_1(\lambda),...,\psi_N(\lambda)).$ This realization of $A$ is called a spectral representation.
\end{theo}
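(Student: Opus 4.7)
The plan is to bootstrap from the continuous functional calculus of Theorem~\ref{Th_cont_func} and the spectral measures $\mu_\psi$ introduced just before the statement. For a fixed $\psi\in H$, I would define the cyclic subspace $H(\psi):=\cspn\{f(A)\psi\dopu f\in C(\sigma(A))\}$ and consider the map $U_\psi$ sending $f(A)\psi$ to $f\in L^2(\sigma(A),d\mu_\psi)$. The identity
$$\|f(A)\psi\|^2=\langle\psi,(\overline{f}f)(A)\psi\rangle=\int_{\sigma(A)}|f|^2\,d\mu_\psi$$
shows $U_\psi$ is isometric, which extends by continuity to a unitary from $H(\psi)$ onto $L^2(\sigma(A),d\mu_\psi)$ because $C(\sigma(A))$ is $L^2$-dense. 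Under $U_\psi$ the operator $A|_{H(\psi)}$ is conjugate to multiplication by $\lam$, since $Af(A)\psi$ corresponds via the product rule of the functional calculus to the function $\lam\mapsto\lam f(\lam)$. This establishes the theorem in the single-cyclic case.

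Next, I would decompose all of $H$ into a countable orthogonal sum of such cyclic subspaces. Proceeding greedily, I pick a unit vector $\psi_1$, form $H(\psi_1)$, and note that $H(\psi_1)$ is $A$-invariant; by self-adjointness of $A$, its orthogonal complement is also invariant under $A$ and under the whole continuous functional calculus, so the construction restricts to that complement. Iterating, at each stage I would choose a unit vector $\psi_{n+1}$ inside the orthogonal complement of the sum of the cyclic subspaces already built. Separability of $H$ forces the process to terminate at some $N\in\N\cup\{\infty\}$ with $H=\bigoplus_{n=1}^{N}H(\psi_n)$; I would justify this by seeding the greedy procedure with a countable dense sequence and exhausting its projections onto the running orthogonal complements stage by stage. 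Setting $\mu_n:=\mu_{\psi_n}$ and $U:=\bigoplus_n U_{\psi_n}$ then yields the claimed representation, and $(UAU^{-1}\psi)_n(\lam)=\lam\psi_n(\lam)$ follows directly from the single-cyclic case.

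The main obstacle I expect is the cyclic decomposition step: verifying $A$-invariance of $H(\psi)^\perp$ so that the induction can actually continue inside the complement, and ensuring via the countable exhaustion argument rooted in separability that the greedy construction covers $H$ rather than leaving a nontrivial residual subspace. Once this is handled, the isometric extension of $U_\psi$ and the assembly of the direct sum are routine consequences of Theorem~\ref{Th_cont_func} and the Riesz representation defining each $\mu_\psi$.
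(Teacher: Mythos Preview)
The paper does not give its own proof of this theorem; it is quoted as a background result with references to Reed--Simon and Teschl, and no argument is supplied. Your proposal is correct and is precisely the standard proof found in those references: build the unitary on a single cyclic subspace via the continuous functional calculus and the Riesz spectral measure, then use separability to write $H$ as a countable orthogonal sum of cyclic subspaces. There is nothing to compare against in the paper itself, and your outline matches the cited sources.
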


Let us define $H_{pp}=\{\psi\in H|\mu_{\psi}$ is pure point$\}$, $H_{ac}=\{\psi\in H|\mu_{\psi}$ is absolutely continuous$\}$, $H_{sc}=\{\psi\in H|\mu_{\psi}$ is singularly continuous$\}$.

\begin{theo}\label{Th_decomp} (see: Theorem VII.4 in \cite{ReedSimon} or Lemma 3.19 in \cite{Teschl})
$H=H_{pp}\oplus H_{ac}\oplus H_{sc}$. Each of these subspaces is invariant under $A$. $A|_{H_{pp}}$ has a complete set of eigenvectors, $A|_{H_{ac}}$ has only absolutely continuous spectral measures and $A|_{H_{sc}}$ has only singularly continuous spectral measures.
\end{theo}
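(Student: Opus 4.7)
The plan is to reduce the decomposition to the classical Lebesgue decomposition of finite Borel measures, pulled back through the spectral representation of Theorem~\ref{Spectral_Th}.

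First, every finite Borel measure $\mu$ on $\R$ uniquely splits as $\mu = \mu^{pp} + \mu^{ac} + \mu^{sc}$, pure point plus absolutely continuous (with respect to Lebesgue measure) plus singularly continuous. Applying this to each of the measures $\mu_n$ from the spectral representation $U\colon H \to \bigoplus_{n=1}^{N} L^{2}(\R, d\mu_n)$, I would construct three pairwise disjoint Borel sets $P, D, C \subset \R$ such that, for every $n$, $\mu_n|_P$ is pure point, $\mu_n|_D$ is absolutely continuous, and $\mu_n|_C$ is singularly continuous. Concretely, $P$ is the (at most countable) union of atoms of all $\mu_n$, $C$ is a Lebesgue-null Borel set carrying the singularly continuous parts of all $\mu_n$, and $D = \R \setminus (P \cup C)$.

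Next, extending the functional calculus of Theorem~\ref{Th_cont_func} to indicators of Borel sets, I would define the pairwise orthogonal projections $E_P = \chi_P(A)$, $E_D = \chi_D(A)$, $E_C = \chi_C(A)$ summing to the identity. A direct computation would show that $E_P H = H_{pp}$, $E_D H = H_{ac}$, $E_C H = H_{sc}$. For instance, if $\psi = E_P \psi$, then for any Borel $B$ one has $\mu_\psi(B) = \|\chi_B(A)\psi\|^2 = \|\chi_{B \cap P}(A)\psi\|^2 = \mu_\psi(B \cap P)$, so $\mu_\psi$ is concentrated on the countable set $P$ and hence pure point; conversely, a pure point $\mu_\psi$ is supported on atoms, which lie in $P$. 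This yields the orthogonal decomposition $H = H_{pp} \oplus H_{ac} \oplus H_{sc}$, and the invariance of each summand under $A$ follows because $A$ commutes with every $\chi_B(A)$.

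For the properties of the restrictions, $P$ is countable, so $E_P = \sum_{\lam \in P} \chi_{\{\lam\}}(A)$, and each $\chi_{\{\lam\}}(A)\psi$ lies in $\ker(A - \lam I)$; this supplies the complete set of eigenvectors in $H_{pp}$. The remaining two restriction assertions are tautological from the construction of $D$ and $C$. The main technical obstacle I anticipate is the simultaneous extraction of a single Lebesgue-null Borel set $C$ supporting the singularly continuous parts of \emph{all} measures $\mu_n$ at once; this relies on the separability of $H$ (encoded as $N \leq \infty$) together with the closure of Lebesgue-null sets under countable unions, plus a careful use of Radon–Nikodym derivatives to pin down $D$ on which every $\mu_n$ is absolutely continuous.
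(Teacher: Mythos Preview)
The paper does not supply its own proof of Theorem~\ref{Th_decomp}; it is listed among the ``Basic facts'' and merely cited from Reed--Simon (Theorem~VII.4) and Teschl (Lemma~3.19). So there is no in-paper argument to compare against.

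Your sketch is essentially the standard proof one finds in those references: Lebesgue-decompose the spectral measures, assemble common carriers $P$, $D$, $C$ (using separability for the countable union of null sets), and realise the three subspaces as ranges of the spectral projections $\chi_P(A)$, $\chi_D(A)$, $\chi_C(A)$. Two small remarks. First, the passage ``extending the functional calculus of Theorem~\ref{Th_cont_func} to indicators of Borel sets'' hides a genuine step---you need the Borel (not just continuous) functional calculus, which is exactly what the spectral representation of Theorem~\ref{Spectral_Th} provides; make that dependence explicit rather than presenting it as a mild extension of the continuous calculus. Second, for the converse inclusion $H_{pp}\subset E_P H$ you should note that any atom of $\mu_\psi$ is automatically an atom of some $\mu_n$ (since $\mu_\psi(\{\lam\})=\|\chi_{\{\lam\}}(A)\psi\|^2$ forces $\mu_n(\{\lam\})>0$ for some $n$ in the spectral representation), so that $P$ really does capture all point masses; you assert this but do not justify it. With those two points filled in, the argument is complete and matches the cited sources.
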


We will use the following notations:

\[\sigma_{pp}=\sigma(A|_{H_{pp}}),\]
\[\sigma_{cont}=\sigma(A|_{H_{cont}}),\text{ where } H_{cont}=H_{ac}\oplus H_{sc},\]
\[\sigma_{ac}=\sigma(A|_{H_{ac}}),\]
\[\sigma_{sc}=\sigma(A|_{H_{sc}}),\]
\[\sigma_{p}=\{\lambda| \lambda \text{ is an eigenvalue of } A\}.\]

Note that

\[\sigma_{cont}=\sigma_{ac}\cup\sigma_{sc},\]
\[\sigma(A)=\overline{\sigma_{p}}\cup\sigma_{cont}.\]

The following useful results can be found in \cite{Teschl} and \cite{Teschl_PDE}.

\begin{theo}\label{thm 10} (see: Theorem 2.20  in \cite{Teschl}) Let A be bounded self-adjoint. Then 
\[\inf\{\sigma(A)\}=\inf_{||x||=1}\left\langle x,Ax\right\rangle\]
and
\[\sup\{\sigma(A)\}=\sup_{||x||=1}\left\langle x,Ax\right\rangle\]
 
\end{theo}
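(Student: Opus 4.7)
The plan is to prove the supremum identity $\sup\sigma(A)=\sup_{\|x\|=1}\langle x,Ax\rangle$; the infimum statement then follows by applying the result to $-A$, since $\sigma(-A)=-\sigma(A)$ and $\langle x,(-A)x\rangle=-\langle x,Ax\rangle$. Throughout, write $M:=\sup\sigma(A)$ and $S:=\sup_{\|x\|=1}\langle x,Ax\rangle$.

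For the inequality $M\le S$, I would show that every real $\mu>S$ lies in the resolvent set. The self-adjoint operator $\mu I-A$ satisfies $\langle x,(\mu I-A)x\rangle \ge(\mu-S)\|x\|^2$, and Cauchy--Schwarz then gives $\|(\mu I-A)x\|\ge(\mu-S)\|x\|$. Hence $\mu I-A$ has trivial kernel and closed range; by self-adjointness its range is also dense, since the orthogonal complement of the range equals $\ker(\mu I-A)=\{0\}$. Therefore $\mu I-A$ is bijective with bounded inverse, so $\mu\notin\sigma(A)$, and letting $\mu\downarrow S$ gives $M\le S$.

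For the reverse inequality $S\le M$, I would invoke the spectral theorem (Theorem \ref{Spectral_Th}) to realize $H$ unitarily as $\bigoplus_{n=1}^{N}L^2(\R,d\mu_n)$ with $A$ conjugated to multiplication by $\lambda$, the measures $\mu_n$ being supported in $\sigma(A)$. For any unit vector $x\in H$ corresponding to $(\psi_n)$, using $\lambda\le M$ $\mu_n$-almost everywhere,
\[
\langle x,Ax\rangle \;=\; \sum_{n}\int_{\sigma(A)}\lambda\,|\psi_n(\lambda)|^2\,d\mu_n(\lambda) \;\le\; M\sum_{n}\int|\psi_n(\lambda)|^2\,d\mu_n(\lambda) \;=\; M\|x\|^2 \;=\; M.
\]
Taking the supremum over $\|x\|=1$ yields $S\le M$, and combining both inequalities gives the theorem.

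The argument is essentially routine once the spectral theorem is available; the only mild subtlety is in the direction $M\le S$, where one must combine the bounded-below estimate with self-adjointness to upgrade injectivity plus closed range to genuine invertibility. One could alternatively give a unified proof entirely within the spectral representation by also producing approximately maximizing vectors supported in $(M-\eps,M]$, but the resolvent approach is cleaner for one half and isolates exactly where self-adjointness is used.
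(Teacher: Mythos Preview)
Your argument is correct. Note, however, that the paper does not supply its own proof of this statement: it is quoted as a background result with a reference to Teschl, so there is no in-paper proof to compare against.

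One remark on your second inequality $S\le M$: invoking the spectral representation (Theorem~\ref{Spectral_Th}) here is legitimate within the logical structure of this paper, since both results are imported as black boxes, but in Teschl's own development Theorem~2.20 precedes the spectral theorem and is in fact one of the ingredients leading up to it. A self-contained route avoiding circularity is to argue directly that if $S<M$ then $M\notin\sigma(A)$: for $\mu=M$ one would have $\langle x,(M I-A)x\rangle\ge(M-S)\|x\|^2>0$, and your own resolvent argument from the first half then shows $M$ is a regular point, contradicting the closedness of $\sigma(A)$ and the definition of $M$ as its supremum. This keeps both directions at the level of elementary operator estimates.
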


Note that 
\[
\inf_{||x||=1}\left\langle x,Ax\right\rangle=\inf_{x\in H,~x\neq 0} \frac{\left\langle  x,Ax\right\rangle}{||x||^2},
\]
\[
\sup_{||x||=1}\left\langle x,Ax\right\rangle=\sup_{x\in H,~x\neq 0} \frac{\left\langle x,Ax\right\rangle}{||x||^2}.
\]
Moreover, one can show the following:

\begin{theo}\label{thm 11} (see Problem 13.1 in \cite{Teschl_PDE}) Let A be bounded self-adjoint (particularly, $\sigma(A)\subset [a,b]$). Then $\lambda_0:=\inf\{\sigma(A)\}$ is an eigenvalue iff $\inf_{||x||=1}\left\langle x,Ax\right\rangle$ is a minimum. In this case, eigenvectors are precisely the minimizers.  
\end{theo}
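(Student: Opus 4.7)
The plan is to reduce everything to the shifted operator $B := A - \lambda_0 I$, which will be positive semi-definite, and then exploit the fact that a positive operator annihilates any vector on which its associated quadratic form vanishes.

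First, I would record that by Theorem \ref{thm 10} applied to $A$,
\[
\lambda_0 = \inf\sigma(A) = \inf_{\|x\|=1}\left\langle x, Ax\right\rangle,
\]
so $\left\langle x, Bx\right\rangle = \left\langle x, Ax\right\rangle - \lambda_0 \geq 0$ for every unit vector $x$, and hence $B$ is a positive bounded self-adjoint operator.

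For the ``only if'' direction, suppose $\lambda_0$ is an eigenvalue with (unit) eigenvector $\psi$. Then $\left\langle \psi, A\psi\right\rangle = \lambda_0$, so the infimum is attained at $\psi$; i.e.\ it is a minimum, and $\psi$ is a minimizer. For the ``if'' direction, suppose the infimum is attained at some $x_0$ with $\|x_0\|=1$. Then $\left\langle x_0, Bx_0\right\rangle = 0$. Now the sesquilinear form $(x,y)\mapsto \left\langle x, By\right\rangle$ is positive semidefinite (because $B\geq 0$), so the Cauchy--Schwarz inequality applies and gives, for every $y\in H$,
\[
|\left\langle y, Bx_0\right\rangle|^2 \leq \left\langle y, By\right\rangle \cdot \left\langle x_0, Bx_0\right\rangle = 0.
\]
Hence $\left\langle y, Bx_0\right\rangle = 0$ for all $y$, which forces $Bx_0 = 0$, i.e.\ $Ax_0 = \lambda_0 x_0$. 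So $\lambda_0$ is an eigenvalue and $x_0$ is an eigenvector. The same chain of implications, read in either direction, shows that the set of unit minimizers coincides exactly with the set of unit eigenvectors for $\lambda_0$, which gives the last claim.

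I do not expect any serious obstacle: the only slightly non-routine step is deducing $Bx_0 = 0$ from $\left\langle x_0, Bx_0\right\rangle = 0$. If one preferred to avoid Cauchy--Schwarz, Theorem \ref{Th_cont_func} furnishes a square root $B^{1/2}$, and then $\|B^{1/2}x_0\|^2 = \left\langle x_0, Bx_0\right\rangle = 0$ gives $B^{1/2}x_0 = 0$ and hence $Bx_0 = 0$; either argument works equally well.
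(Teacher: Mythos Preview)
Your argument is correct and standard: shifting to $B=A-\lambda_0 I$, using Theorem~\ref{thm 10} to see $B\ge 0$, and then applying Cauchy--Schwarz for the positive semidefinite form (or, equivalently, the square-root trick via Theorem~\ref{Th_cont_func}) to pass from $\langle x_0,Bx_0\rangle=0$ to $Bx_0=0$ is exactly the expected route. The identification of minimizers with eigenvectors follows as you wrote.

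There is, however, nothing to compare against in the paper itself: Theorem~\ref{thm 11} is quoted from \cite{Teschl_PDE} (Problem~13.1) and is stated without proof, being used only as a black box in the proof of Lemma~\ref{lem_ind}. So your write-up is not an alternative to the paper's proof but rather a self-contained justification of a result the paper merely cites.
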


\section {Main result}
\begin{propos}\label{main_ex}
Suppose the spectrum $\sigma(A)$ of the self-adjoint operator $A$ contains a set of eigenvalues $B$ possessing the following properties:
\begin{enumerate}
\item $B$ has at least two elements;
\item either $B$ doesn't have minimum or the multiplicity of the minimum is infinite;
\item  either $B$ doesn't have maximum or the multiplicity of the maximum is infinite.
\end{enumerate}
Then $E$ is not LEC-plastic.
\end{propos}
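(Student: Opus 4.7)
The plan is to reduce to constructing a single explicit linear bijection $T\colon H\to H$ and then to exhibit such a $T$ as a weighted shift on eigenvectors drawn from $B$. Any linear operator whose restriction to $E$ is a non-expansive bijection onto $E$ must, by scaling and the fact that a surjective contraction of a Hilbert-space norm onto its unit ball is an isometry, be a $\|\cdot\|_A$-isometry with $\|T\|\le 1$ (here $\|x\|_A=\sqrt{\langle x,Ax\rangle}$). Equivalently $T^*AT=A$ and $T^*T\le I$; failure to be an isometry of $E$ becomes $T^*T\ne I$. So the proposition reduces to exhibiting an invertible $T\in\mathcal{L}(H)$ satisfying
\[T^*AT=A,\qquad T^*T\le I,\qquad T^*T\ne I.\]

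From the hypotheses I would extract two orthonormal systems of eigenvectors $\{e_n\}_{n\ge 1}$ and $\{f_n\}_{n\ge 1}$ of $A$, with $Ae_n=\mu_n e_n$ and $Af_n=\nu_n f_n$, chosen so that $\{\mu_n\}\subset B$ is strictly decreasing (when $\min B$ is not attained) or constantly equal to $\min B$ (when it has infinite multiplicity); symmetrically $\{\nu_n\}\subset B$ is strictly increasing or constantly equal to $\max B$; and $\mu_1<\nu_1$, which (using orthogonality of eigenvectors from different eigenspaces) makes every $e_n$ orthogonal to every $f_m$. With the weights
\[c_n=\sqrt{\mu_n/\mu_{n-1}}\ (n\ge 2),\qquad d=\sqrt{\mu_1/\nu_1},\qquad d_n=\sqrt{\nu_n/\nu_{n+1}}\ (n\ge 1),\]
each of which lies in $(0,1]$ and satisfies $d<1$, define $Te_n=c_n e_{n-1}$ for $n\ge 2$, $Te_1=d f_1$, $Tf_n=d_n f_{n+1}$, and extend $T$ by the identity on the orthogonal complement of $\cspn\{e_n,f_n\}$ (which is $A$-invariant by self-adjointness of $A$).

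Verification is a routine basis computation. The image of the orthonormal basis $\{e_n\}\cup\{f_n\}$ under $T$ is a set of mutually orthogonal non-zero vectors hitting each basis element exactly once, so $T$ is a linear bijection of $H$ with bounded inverse (the weights are uniformly bounded below because $\sigma(A)$ lies in a compact subinterval of $(0,\infty)$). A direct computation of $T^*$ shows that $T^*AT$ and $T^*T$ are both diagonal in this basis: the weights were chosen precisely so that $T^*AT$ reproduces the eigenvalues $\mu_n,\mu_1,\nu_n$ of $A$, while $T^*T$ has eigenvalues $c_n^2,d^2,d_n^2$, all $\le 1$, with the strict inequality $d^2=\mu_1/\nu_1<1$ supplied by hypothesis (1) forcing $T^*T\ne I$. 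The main obstacle is the preliminary combinatorial step---unpacking conditions (1)--(3) into a uniform choice of the sequences $\{\mu_n\},\{\nu_n\}$ with $\mu_1<\nu_1$---after which the shift-with-crossover construction and its three algebraic identities are essentially formal.
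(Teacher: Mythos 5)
Your construction is correct and is essentially the paper's own proof: both build a two-sided weighted shift along a $\Z$-indexed chain of orthonormal eigenvectors whose eigenvalues descend toward $\inf B$ on one side and ascend toward $\sup B$ on the other, with weights $\sqrt{\lambda_{\mathrm{source}}/\lambda_{\mathrm{target}}}\le 1$ chosen to preserve $\langle x,Ax\rangle$, extended by the identity on the orthogonal complement. The only differences are cosmetic (your indexing by two copies of $\N$ with a crossover versus the paper's single $\Z$-index, and your explicit $T^*AT=A$, $T^*T\le I$, $T^*T\ne I$ reformulation).
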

\begin{proof}
Denote $r = \inf B$, $R = \sup B$; according to (1) $r < R$. The property (2) ensures the existence of distinct $n_k \in \N$, $k = 1, 2, \ldots$ such that eigenvalues $\lambda_{n_k} \in B$, $\lambda_{n_k} < \frac12 (r + R)$ and
$$
\lambda_{n_1} \ge \lambda_{n_2} \ge \lambda_{n_3} \ge \ldots, \quad \lim_{k \to \infty} \lambda_{n_k}  = r.
$$
Analogously, the property (3) gives us the existence of distinct $n_k \in \N$, $k = 0, -1, -2, \ldots$ such that $\lambda_{n_k}\in B$ and
$$
\lambda_{n_1} < \lambda_{n_0} \le \lambda_{n_{{-1}}} \le \lambda_{n_{-2}} \le \ldots, \quad \lim_{k \to - \infty} \lambda_{n_k}  = R.
$$
Take in $H$ the orthonormalized eigenvectors $e_{n_k}$ corresponding to $\lambda_{n_k}$ (in case of infinite multiplicity of the minimum we are choosing them to be ONB of $Ker(A-\lambda_{min})$, analogously for the maximum) and extend to an orthonormal basis $e_n$ in $H$.
Define the linear operator $T$ as follows: $Te_n = e_n$ for $n  \in \N \setminus \{n_k\}_{k \in \Z}$, and $Te_{n_k} = \sqrt{\frac{\lambda_{n_{k}}}{\lambda_{n_{k-1}}}}e_{n_{k-1}}$, for $k \in \Z$. Using the fact that for a self-adjoint operator $A$ and $x=\sum x_k e_{n_k}$ (here $e_{n_k}$ are eigenvectors of $A$): $\left\langle x+x^{\perp}, A(x+x^{\perp})\right\rangle=\left\langle x,Ax\right\rangle + \left\langle x^{\perp},Ax^{\perp}\right\rangle$, we get that the linear non-expansive operator $T$ maps $E$ onto itself bijectively but not isometrically.
\end{proof}
\ifshort
\begin{example}
Let $A\colon L_2[1,2]\to L_2[1,2]$, be an operator acting by the rule
$$Af(t)=tf(t).$$
Consider an ellipsoid $E\in L_2[1,2]$ generated by this operator $A$, i.e.
$$E=\{f\in L_2[1,2]: \left\langle Af,f\right\rangle = \int_{1}^{2} t|f(t)|^2dt\leq 1 \}.$$
$E$ is not LEC-plastic.
\end{example}
\begin{proof}
Consider some infinite partition of the segment $[1,2]$
$$[1,2] =\bigsqcup_{k=-\infty}^{\infty}\Delta_k,$$
where $\Delta_k=[a_k, a_{k+1}]$. Then $L_2[1,2]=\oplus_2 L_2(\Delta_k)$ and every $f\in L^2[1,2]$ can be written as  $f=\sum_{k\in \Z}f_k$, where $f_k(t) \in L_2(\Delta_k)$ (i.e. $f_k$ is supported on $\Delta_k$).
Let $\{e_n^k(t)\}_{n\in \Z}$ be an orthogonal basis on $L^2(\Delta_k)$ that is chosen in such a way that $e_n^{k+1}(t)=\sqrt{\frac{a_{k+1}-a_{k}}{a_{k+2}-a_{k+1}}} e_n^{k}(\frac{a_{k+1}-a_k}{a_{k+2}-a_{k+1}}t-\frac{a_{k+1}^2-a_ka_{k+2}}{a_{k+2}-a_{k+1}})$ for $t\in \Delta_{k+1}$. Note that $\frac{a_{k+1}-a_k}{a_{k+2}-a_{k+1}}t-\frac{a_{k+1}^2-a_ka_{k+2}}{a_{k+2}-a_{k+1}}$ maps $\Delta_{k+1}$ onto $\Delta_k$

Consider an operator $T\colon L_2[1,2]\to L_2[1,2]$ acting as follows
$$(Tf)(t)=\sum_{k\in Z}g_k(t)(H_kf_k)(t),\text{ where } H_k\colon L_2(\Delta_k)\to L_2(\Delta_{k+1}), H_k e_n^k= e_n^{k+1}.$$
Note that $f_k=\sum_{n\in\Z} f_n^k e_n^k$ and $H_k f_k = \sum_{n\in\Z} f_n^k e_n^{k+1}$.

Then computing  $\left\langle f,Af\right\rangle$ and $\left\langle Tf,ATf\right\rangle$ we get
$$\left\langle f,Af\right\rangle = \int_{1}^{2}t|f(t)|^2dt =\sum_{k\in \Z}\int_{\Delta_k}t |\sum_{n\in \Z}f_n^k e_n^k(t)|^{2} dt.$$
and
\begin{align*}
  \left\langle Tf,ATf\right\rangle &= \int_{1}^{2}t|Tf(t)|^2dt =\sum_{k\in \Z}\int_{\Delta_{k+1}}t |g_k(t)|^2|\sum_{n\in \Z}f_n^k e_n^{k+1}(t)|^{2} dt\\
   &= \sum_{k\in \Z}\int_{\Delta_{k+1}}t |g_k(t)|^2\frac{a_{k+1}-a_{k}}{a_{k+2}-a_{k+1}}|\sum_{n\in \Z}f_n^k e_n^{k}(\frac{a_{k+1}-a_k}{a_{k+2}-a_{k+1}}t-\frac{a_{k+1}^2-a_ka_{k+2}}{a_{k+2}-a_{k+1}})|^{2} dt \\
  &= \sum_{k\in \Z}\int_{\Delta_k} |\hat{g_k}(s)|^2(\frac{a_{k+2}-a_{k+1}}{a_{k+1}-a_k}s+\frac{a_{k+1}^2-a_ka_{k+2}}{a_{k+1}-a_k})|\sum_{n\in \Z}f_n^k e_n^{k}(s)|^{2} ds
\end{align*}
Choosing $\hat{g_k}(s)=\sqrt{\frac{s(a_{k+1}-a_k)}{s(a_{k+2}-a_{k+1})+a_{k+1}^2-a_ka_{k+2}}}$ for  $s\in \Delta_k$, we get that $\left\langle f,Af\right\rangle \Leftrightarrow \left\langle Tf,ATf\right\rangle$. That is, $T$ maps $E$ bijectively onto itself.

On the other hand, $T$ is a contraction.
Indeed, performing similar computations as above we get $||f||^2=\sum_k \int_{\Delta_k}|f_k(t)|^2 dt$ and $||Tf||^2=\sum_k\int_{\Delta_k}|\hat{g_k}(t)|^2 |f_k(t)|^2dt$. So it is sufficient to check that $|\hat{g_k}(s)|<1$. To see this observe that $|\hat{g_k}(s)|<1 \Leftrightarrow h_k(s)=\hat{g_k}^2(s)<1$ and $h_k(s)$ is monotonic on $\Delta_k$ (e.g. its derivative does not change its sign on $\Delta_k$). Hence $h_k(s)$ has its maximum either at $a_k$ or at $a_{k+1}$. Computing $h_k(s)$ at these points, we get
$$h_k(a_k)=\frac{a_k}{a_{k+1}}<1,$$
$$h_k(a_{k+1})=\frac{a_{k+1}}{a_{k+2}}<1.$$
\end{proof}

Let us generalize the above example to the case when operator has purely absolutely continuous spectrum. In this case due to the spectral theorem w.l.o.g. we can assume that $A:L_2(\mathbb{R},\mu(t)dt)\to L_2(\mathbb{R},\mu(t)dt)$ is acting as $Af(t)=tf(t)$, where $\mu(t)\geq 0\in L^1(\mathbb{R})$ with $supp(\mu)\subset(\alpha,\beta)$, $\alpha>0$ (since we have assumed $A$ to be bounded both from below and above, and $\sigma(A)=\sigma_{ac}(A)=supp (\mu(t))$).

\begin{propos}\label{prop:2.3.}
Let $\mu(t)\in L^1(\mathbb{R})$ with $supp(\mu)\subset(\alpha,\beta)$, $\alpha>0$ and $\mu(t)>0$ (a.e.) on $(\alpha,\beta)$. Let $A\colon L_2(\mathbb{R},\mu(t)dt)\to L_2(\mathbb{R},\mu(t)dt)$ be an operator acting by the rule
$$Af(t)=tf(t).$$
Consider an ellipsoid $E\in L_2(\mathbb{R},\mu(t)dt)$ generated by this operator $A$, i.e.
$$E=\{f\in L_2(\mathbb{R},\mu(t)dt): \left\langle Af,f\right\rangle = \int_{\alpha}^{\beta} t|f(t)|^2\mu(t)dt\leq 1 \}.$$
$E$ is not LEC-plastic.
\end{propos}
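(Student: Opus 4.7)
The plan is to mimic the preceding $[1,2]$ example: build a bounded linear operator $T:L_2(\R,\mu(t)dt)\to L_2(\R,\mu(t)dt)$ which maps $E$ bijectively onto itself and is a strict contraction, hence not an isometry. The only new ingredient needed is a measure-theoretic replacement for the explicit affine maps used there; this is exactly what Corollary \ref{Cor} supplies. Concretely, I fix a strictly increasing sequence $\{a_k\}_{k\in\Z}\subset(\alpha,\beta)$ with $a_k\to\alpha$ as $k\to-\infty$ and $a_k\to\beta$ as $k\to+\infty$, and put $\Delta_k=[a_k,a_{k+1})$, $\mu_k=\mu|_{\Delta_k}$, $M_k=\mu(\Delta_k)$; all $M_k$ are positive because $\mu>0$ a.e., and $L_2(\R,\mu(t)dt)=\bigoplus_k L_2(\Delta_k,d\mu_k)$. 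Each $\mu_k$ is finite, compactly supported and atomless, so Corollary \ref{Cor} yields measurable maps $G_k:\Delta_{k+1}\to\Delta_k$ with $(G_k)_*\mu_{k+1}=(M_{k+1}/M_k)\mu_k$; Theorem \ref{thm:2} turns
$$H_k:L_2(\Delta_k,d\mu_k)\to L_2(\Delta_{k+1},d\mu_{k+1}),\qquad (H_kf)(s)=\sqrt{M_{k+1}/M_k}\,f(G_k(s)),$$
into an isometry, and applying the Corollary to the reversed pair supplies an inverse, so $H_k$ is in fact unitary.

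For $f=\sum_k f_k$ with $f_k$ supported on $\Delta_k$ I would then set
$$(Tf)(t)=\sum_k g_k(t)(H_kf_k)(t),\qquad g_k(t):=\frac{M_k}{M_{k+1}}\sqrt{\frac{G_k(t)}{t}},\ t\in\Delta_{k+1}.$$
This choice of $g_k$ is forced by the requirement that $t|g_k(t)|^2=(M_k/M_{k+1})^2 G_k(t)$ depends on $t$ only through $G_k(t)$, which makes the block integrand $t|g_k(t)|^2|H_kf_k(t)|^2$ a genuine pullback of $(M_k/M_{k+1})^2 s|f_k(s)|^2$ under $G_k$. Applying Theorem \ref{thm:2} together with the pushforward $(G_k)_*\mu_{k+1}=(M_{k+1}/M_k)\mu_k$ block by block collapses this integral back to $\int_{\Delta_k}s|f_k(s)|^2 d\mu_k(s)$; summing gives $\langle Tf,ATf\rangle=\langle f,Af\rangle$, so $T(E)\subseteq E$.

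The same change-of-variables applied without the factor $t$ yields
$$\|T_k f_k\|^2=\frac{M_k}{M_{k+1}}\!\int_{\Delta_{k+1}}\!\frac{G_k(t)}{t}|f_k(G_k(t))|^2 d\mu_{k+1}(t),\quad \|f_k\|^2=\frac{M_k}{M_{k+1}}\!\int_{\Delta_{k+1}}\!|f_k(G_k(t))|^2 d\mu_{k+1}(t),$$
and the strict inequality $G_k(t)/t<1$ (because $G_k(t)\in\Delta_k$ lies to the left of $t\in\Delta_{k+1}$) gives $\|T_kf_k\|<\|f_k\|$ for every nonzero $f_k$, so $\|T\|\le 1$ and $T$ is not an isometry. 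The lower bound $G_k(t)/t\ge\alpha/\beta$ shows $g_k$ is bounded away from $0$, so each block $T_k=g_kH_k$ is an invertible bounded operator $L_2(\Delta_k)\to L_2(\Delta_{k+1})$ and the index-shift structure makes $T$ a bijection of $L_2(\R,\mu(t)dt)$; combined with preservation of the quadratic form this gives $T(E)=E$, proving that $E$ is not LEC-plastic. The main technical obstacle is the change-of-variables computation: one must recognise that the specific choice of $g_k$ is exactly the one making $t|g_k|^2|H_kf_k|^2$ factor through $G_k$, after which Theorem \ref{thm:2} and the pushforward relation from Corollary \ref{Cor} combine cleanly to deliver both the invariance of $\langle\cdot,A\cdot\rangle$ and—with the weight $t$ dropped—the strict contraction of the norm.
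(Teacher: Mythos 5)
Your construction is correct, but it takes a genuinely different (and more general) route than the proof the paper gives for Proposition \ref{prop:2.3.}. The paper exploits the absolute continuity of $\mu(t)\,dt$: it transports $L_2(\Delta_k,\mu_k(t)dt)$ onto $L_2(\Delta_{k+1},\mu_{k+1}(t)dt)$ via an explicit affine substitution $L_k\colon\Delta_{k+1}\to\Delta_k$, with the Jacobian and the ratio of densities $\mu(\cdot)$ appearing under the square root, and then reads off the multiplier $\hat g_k(s)=\sqrt{s/L_k^{-1}(s)}$ directly from the change of variables. You instead invoke Corollary \ref{Cor} to produce abstract measure isomorphisms $G_k$ between the blocks and push everything forward with Theorem \ref{thm:2}; this is precisely the argument the paper deploys later for Proposition \ref{prop:2.7.} (general atomless measures), of which the present statement is the special absolutely continuous case. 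Your route buys generality and avoids all Radon--Nikodym bookkeeping; the paper's route buys a completely explicit operator $T$. Both arrive at the same three facts: preservation of the quadratic form $\left\langle Tf,ATf\right\rangle=\left\langle f,Af\right\rangle$ (hence $T(E)=E$ once blockwise invertibility is observed), $\|Tf\|\le\|f\|$, and strict norm decrease because $G_k(t)<t$ for $t\in\Delta_{k+1}$.

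One slip is worth fixing. With the pushforward relation you correctly state, $(G_k)_*\mu_{k+1}=(M_{k+1}/M_k)\mu_k$, the operator $f\mapsto\sqrt{M_{k+1}/M_k}\,f\circ G_k$ is \emph{not} an isometry: it multiplies the $L_2$ norm by $M_{k+1}/M_k$, and the unitary normalisation is $\sqrt{M_k/M_{k+1}}\,f\circ G_k$. Your multiplier $g_k$ carries a compensating factor $M_k/M_{k+1}$, so the composite block operator $T_kf_k(t)=\sqrt{(M_k/M_{k+1})\,G_k(t)/t}\,f_k(G_k(t))$ and all of your displayed integral identities are nevertheless correct; only the intermediate assertion that $H_k$ itself is unitary is off by that constant, and nothing downstream uses more than its boundedness and invertibility.
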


\begin{proof} Note that under our assumptions $L_2(\mathbb{R},\mu(t)dt)$ can be identified with
$L_2((\alpha,\beta),\mu(t)dt)$.

Consider some infinite partition of the segment
$$[\alpha,\beta) =\bigsqcup_{k=-\infty}^{\infty}\Delta_k,$$
where $\Delta_k=[a_k, a_{k+1})$, $a_{-\infty}=\alpha, a_{\infty}=\beta$. Note that $L_2((\alpha,\beta),\mu(t)dt)=\oplus_2 L_2(\Delta_k,\mu_k(t)dt)$.

Introduce the associated operators $H_k\colon L^2(\Delta_k,\mu_k(t)dt)\to L^2(\Delta_{k+1},\mu_{k+1}(t)dt)$ with $\mu_k(t):=\chi_{\Delta_k}(t)\mu_k(t)$ acting as
\begin{align*}
H_kf_k(t)&=f_k(\frac{a_{k+1}-a_k}{a_{k+2}-a_{k+1}}t-\frac{a_{k+1}^2-a_ka_{k+2}}{a_{k+2}-a_{k+1}})\cdot\\
&\sqrt{\frac{\mu(\frac{a_{k+1}-a_k}{a_{k+2}-a_{k+1}}t-\frac{a_{k+1}^2-a_k a_{k+2}}{a_{k+2}-a_{k+1}})}{\mu(t)}\cdot\frac{a_{k+1}-a_k}{a_{k+2}-a_{k+1}}}.
\end{align*}

Then we have (using change of variables and the fact that $\mu_k(t):=\chi_{\Delta_k}(t)\mu_k(t)$)
\begin{align*}
\|H_kf_k\|_{L^2(\Delta_{k+1},\mu_{k+1}(t)dt)}^2&=\int_{\Delta_{k+1}}|f_k(\frac{a_{k+1}-a_k}{a_{k+2}-a_{k+1}}t-\frac{a_{k+1}^2-a_ka_{k+2}}{a_{k+2}-a_{k+1}})|^2\cdot\\
&\frac{\mu(\frac{a_{k+1}-a_k}{a_{k+2}-a_{k+1}}t-\frac{a_{k+1}^2-a_ka_{k+2}}{a_{k+2}-a_{k+1}})}{\mu(t)}\cdot\frac{a_{k+1}-a_k}{a_{k+2}-a_{k+1}}\mu_{k+1}(t)dt=\\
&\int_{\Delta_k}|f_k(s)|^2\mu(s)ds=\|f_k\|_{L^2(\Delta_{k},\mu_{k}(t)dt)}^2
\end{align*}
Consider an operator $T\colon L_2((\alpha,\beta),\mu(t)dt)\to L_2((\alpha,\beta),\mu(t)dt)$ acting as
$$Tf_k=g_kH_kf_k.$$
Then
$$\left\langle f,Af\right\rangle =\sum_{k\in \Z}\int_{\Delta_k}t|f|^2\mu(t)dt,$$
and
\begin{align*}
\left\langle Tf,ATf\right\rangle &=\sum_{k\in \Z}\int_{\Delta_{k+1}}t|g_k(t)|^2|H_kf_k|^2\mu(t)dt=\\
&\sum_{k\in \Z}\int_{\Delta_{k+1}}t|g_k(t)|^2|f_k(\frac{a_{k+1}-a_k}{a_{k+2}-a_{k+1}}t-\frac{a_{k+1}^2-a_ka_{k+2}}{a_{k+2}-a_{k+1}})|^2\cdot\\
&\frac{\mu(\frac{a_{k+1}-a_k}{a_{k+2}-a_{k+1}}t-\frac{a_{k+1}^2-a_ka_{k+2}}{a_{k+2}-a_{k+1}})}{\mu(t)}\cdot\frac{a_{k+1}-a_k}{a_{k+2}-a_{k+1}}\mu(t)dt=\\
&\sum_{k\in \Z}\int_{\Delta_k}(\frac{a_{k+1}-a_k}{a_{k+2}-a_{k+1}}s-\frac{a_{k+1}^2-a_ka_{k+1}}{a_{k+2}-a_{k+1}})|\hat{g_k}(t)|^2|f_k|^2\mu(t)dt
\end{align*}
Choosing $\hat{g_k}(s)=\sqrt{\frac{s(a_{k+1}-a_k)}{s(a_{k+2}-a_{k+1})+a_{k+1}^2-a_ka_{k+2}}}$ for  $s\in \Delta_k$, we get that $\left\langle f,Af\right\rangle \Leftrightarrow \left\langle Tf,ATf\right\rangle$. Now we can proceed as in the above example, and get the result.
\end{proof}

\begin{remark}
Proposition \ref{prop:2.3.} shows that in the case when absolutely continuous part of the spectrum contains an interval, on which the corresponding measure is strictly positive, an ellipsoid is not LEC-plastic. Indeed, we can split an operator into two pieces: the first piece corresponds to the absolutely continuous measure multiplied with characteristic function of this interval, the second piece corresponds to the rest. Then we can use the above described construction on the first piece in order to construct an operator $T$.
\end{remark}
\else
\fi

Now, let us consider the case when the operator has purely continuous spectrum. 

\begin{propos}\label{prop:2.7.}
Let $\mu(t)$ be an (finite) atomless Borel measure with $supp(\mu)\subset(\alpha,\beta)$, $\alpha>0$. Let $A\colon L_2(\mathbb{R},d\mu(t))\to L_2(\mathbb{R},d\mu(t))$ be an operator acting by the rule
$$Af(t)=tf(t).$$
Consider an ellipsoid $E\in L_2(\mathbb{R},d\mu(t))$ generated by this operator $A$, i.e.
$$E=\{f\in L_2(\mathbb{R},\mu(t)dt): \left\langle Af,f\right\rangle = \int_{\alpha}^{\beta} t|f(t)|^2d\mu(t)\leq 1 \}.$$
$E$ is not LEC-plastic.
\end{propos}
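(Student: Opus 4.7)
The plan is to mimic the construction from the absolutely continuous case (Proposition~\ref{prop:2.3.}), replacing its explicit affine changes of variables by the abstract measure-transport maps provided by Corollary~\ref{Cor}. First I would partition $(\alpha,\beta)$ into bi-infinitely many half-open intervals $\Delta_k := [a_k, a_{k+1})$, $k \in \Z$, with $a_k \to \alpha$ as $k \to -\infty$ and $a_k \to \beta$ as $k \to +\infty$, chosen so that $M_k := \mu(\Delta_k) > 0$ for every $k$; such a choice is possible because $\mu$ is atomless and finite, so any positive bi-infinite summation of the total mass $\mu((\alpha,\beta))$ can be realised as $\{M_k\}_{k\in\Z}$ via the (continuous) distribution function $F_\mu$. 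The resulting orthogonal decomposition is $L_2(\R,d\mu) = \bigoplus_{k\in\Z} L_2(\Delta_k, d\mu_k)$ with $\mu_k := \mu|_{\Delta_k}$.

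Next I would apply Corollary~\ref{Cor} to each pair of atomless measures $(\mu_k, \mu_{k+1})$ to obtain transport maps $G_k := G_{\mu_k, \mu_{k+1}} \colon \Delta_{k+1} \to \Delta_k$ satisfying $(G_k)_*\mu_{k+1} = (M_{k+1}/M_k)\,\mu_k$, with $\mu_k$-a.e.\ inverses $h_k := G_{\mu_{k+1}, \mu_k}$ going the opposite way. By Theorem~\ref{thm:2}, the rule $H_k f(t) := \sqrt{M_k/M_{k+1}}\, f(G_k(t))$ defines a unitary isomorphism $H_k \colon L_2(\Delta_k, d\mu_k) \to L_2(\Delta_{k+1}, d\mu_{k+1})$. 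The candidate operator is
$$
T\Big(\sum_{k \in \Z} f_k\Big) := \sum_{k \in \Z} g_k \cdot H_k f_k,
$$
where $g_k$ is a positive multiplication function on $\Delta_{k+1}$ that remains to be pinned down.

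A direct computation, transporting the integral defining $\langle Tf, ATf\rangle$ from $\Delta_{k+1}$ back to $\Delta_k$ via Theorem~\ref{thm:2} and using $t = h_k(G_k(t))$, reduces $\langle Tf, ATf\rangle$ to $\sum_k \int_{\Delta_k} h_k(s)\, |g_k(h_k(s))|^2\, |f_k(s)|^2\, d\mu(s)$. Equating this with $\langle f, Af\rangle$ forces the choice $g_k(t) := \sqrt{G_k(t)/t}$ on $\Delta_{k+1}$. Since $G_k(t) \in \Delta_k$ and $t \in \Delta_{k+1}$ give $G_k(t) \le a_{k+1} \le t$, this $g_k$ is strictly less than $1$ a.e., while the bound $g_k(t) \ge \sqrt{\alpha/\beta} > 0$ keeps $T$ bounded below. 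Consequently $T$ preserves the $A$-quadratic form and sends $E$ bijectively onto itself (the bi-infinite shift being invertible), while being a strict contraction in the $L_2$-norm; in particular $T$ is a non-expansive linear bijection of $E$ that fails to be an isometry, so $E$ is not LEC-plastic.

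The point that will require the most care is that for a general atomless $\mu$ the transport maps $G_k, h_k$ are defined only $\mu$-a.e., so one must verify that the pushforward change-of-variables identity still legitimises the manipulation above; however, this is precisely the setting of Theorem~\ref{thm:2}, and since all quantities of interest are $L_2$-integrals they are insensitive to null sets, so the construction should go through without modification.
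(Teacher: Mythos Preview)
Your proposal is correct and follows essentially the same route as the paper: a bi-infinite partition $\{\Delta_k\}_{k\in\Z}$ of positive $\mu$-mass, the Corollary~\ref{Cor} transport maps $G_k$ between consecutive pieces, unitary shift operators $H_k$ built from them, and the multiplier $g_k(t)=\sqrt{G_k(t)/t}$ (equivalently the paper's $\hat g_k(s)=\sqrt{s/G_k^{-1}(s)}$) chosen so that $T$ preserves the $A$-form while strictly contracting the $L_2$-norm. Your additional remark that $g_k\ge\sqrt{\alpha/\beta}$ guarantees boundedness below, and your explicit acknowledgement that the a.e.\ nature of $G_k$ is harmless via Theorem~\ref{thm:2}, are welcome clarifications that the paper leaves implicit.
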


\begin{proof} Note that under our assumptions $L_2(\mathbb{R},d\mu(t))$ can be identified with
$L_2((\alpha,\beta),d\mu(t))$.

Consider some infinite partition of the segment
$$[\alpha,\beta) =\bigsqcup_{k=-\infty}^{\infty}\Delta_k,$$
where $\Delta_k=[a_k, a_{k+1})$, $a_{-\infty}=\alpha$, $a_{\infty}=\beta$, and such that $\mu(\Delta_k)>0$ for all $k$ (we can do this e.g. using any convergent series). Note that $L_2((\alpha,\beta),d\mu(t))=\oplus_2 L_2(\Delta_k,d\mu_k(t))$ with $\mu_k:=\mu|_{\Delta_k}$.
We will use the following notation $M_{\mu_k}:=\mu_k(\Delta_k)=\mu(\Delta_k)$.
By Theorem \ref{thm:1}, the spaces $(\Delta_k,\mu_k(t))$ are mutually almost isomorphic and we will denote the corresponding isomorphism by $G_k:=G_{\mu_k,\mu_{k+1}}\colon supp(\mu_{k+1})\longrightarrow supp(\mu_{k})$ .

Introduce the associated operators $H_k\colon L^2(\Delta_k,d\mu_k)\to L^2(\Delta_{k+1},d\mu_{k+1})$ acting as 
\[
H_kf_k=f_k\circ G_k\cdot\sqrt{\frac{M_{\mu_{k+1}}}{M_{\mu_{k}}}}.
\]


Then using Theorem \ref{thm:2} in the first step and Corollary \ref{Cor} on the second step, we get
\begin{equation*}
\int_{\Delta_{k+1}}\left|f_k\circ G_k\right|^2\frac{M_{\mu_{k+1}}}{M_{\mu_{k}}}d\mu_{k+1}=
\int_{\Delta_{k}}|f_k|^2\frac{M_{\mu_{k+1}}}{M_{\mu_{k}}}d(G_{k*}\mu_{k+1})=\int_{\Delta_k}|f_k|^2d\mu_k.
\end{equation*}
For the second step note that $G_{k*}\mu_{k+1}=\mu_{k+1}\circ G_k=\frac{M_{\mu_{k}}}{M_{\mu_{k+1}}}\mu_k$. This means that \[\|H_kf_k\|_{L^2(\Delta_{k+1},d\mu_{k+1})}=\|f_k\|_{L^2(\Delta_{k},d\mu_{k})}.\]
Now, consider an operator $T\colon L_2((\alpha,\beta),d\mu(t))\to L_2((\alpha,\beta),d\mu(t))$ acting as
$$Tf_k=g_kH_kf_k,$$
where $g_k$ will be defined later.

Then we have
$$\left\langle f,Af\right\rangle =\sum_{k\in \Z}\int_{\Delta_k}t|f(t)|^2d\mu_k(t),$$
and using again Theorem \ref{thm:2} and Corollary \ref{Cor}, we get
\begin{align*}
\left\langle Tf,ATf\right\rangle &=\sum_{k\in \Z}\int_{\Delta_{k+1}}t|g_k(t)|^2\left|(f_k\circ G_k)(t)\right|^2\frac{M_{\mu_{k+1}}}{M_{\mu_{k}}}d\mu_{k+1}(t)\\
&=\sum_{k\in \Z}\int_{\Delta_{k}}G_k^{-1}(s)\left|(g_k\circ G_k^{-1})(s)\right|^2|f_k(s)|^2d\mu(s).
\end{align*}
Let us denote $\hat g_k(s):=g_k G_k^{-1}(s)$.

We will choose $g_k$ ($\hat g_k(s)$) in such way that $\left\langle f,Af\right\rangle=\left\langle Tf,ATf\right\rangle$ (then $T:E\longrightarrow E$ is bijective), i.e.
\[\hat g_k(s):=\sqrt{\frac{s}{G_k^{-1}(s)}},\quad s\in \Delta_k.\]

It remains to show that $T$ is non-expansive but not an isometry. Notice that $\hat g_k(s)^2\leq 1$ implies that $T$ is non-expansive. Then to show that $T$ is not an isometry it suffices to show that there is $k\in \N$ and $t\in \Delta_k$ such that $\hat g_k(s)^2< 1$ in some neighbourhood of this $t$.
Indeed, for $s\in\Delta_k$ by construction $G_k^{-1}(s)\in\Delta_{k+1}$, which means $\hat g_k(s)^2< 1$ for $s\in\Delta_k$. Thus $T$ is non-expansive but not an isometry.
\end{proof}

\begin{corollary} Let  $A$ be a bounded self-adjoint operator.
Let $\sigma_{cont}\neq \{\emptyset\}$. Then the ellipsoid generated by this operator is not LEC-plastic.
\end{corollary}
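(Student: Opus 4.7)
The plan is to reduce to Proposition~\ref{prop:2.7.} by isolating a single continuous spectral summand. Since $\sigma_{cont}\neq\emptyset$, the subspace $H_{cont}=H_{ac}\oplus H_{sc}$ from Theorem~\ref{Th_decomp} is nontrivial, $A$-invariant, and the restriction $A|_{H_{cont}}$ has only continuous spectral measures. Applying Theorem~\ref{Spectral_Th} to $A|_{H_{cont}}$ yields a unitary equivalence with $\bigoplus_{n} L^2(\R,d\mu_n)$ on which $A|_{H_{cont}}$ acts as multiplication by $\lambda$, and each $\mu_n$ is atomless (as it comes from a cyclic vector in $H_{cont}$). Pick any index $n_0$ with $\mu_{n_0}\neq 0$; its support is contained in $\sigma_{cont}\subset\sigma(A)\subset[\alpha,\beta]$ for some $\alpha>0$, since $\inf_{x\in S_H}\langle Ax,x\rangle>0$ (using Theorem~\ref{thm 10}). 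Thus we are exactly in the situation of Proposition~\ref{prop:2.7.} on the single summand $L^2(\R,d\mu_{n_0})$.

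Next, let $T_0$ be the operator produced by Proposition~\ref{prop:2.7.} on this summand: $T_0$ is linear, non-expansive, not an isometry, and preserves the quadratic form, i.e. $\langle T_0 f, A T_0 f\rangle = \langle f, Af\rangle$ for every $f\in L^2(\R,d\mu_{n_0})$, and it maps the corresponding ellipsoid bijectively onto itself. Define $T\colon H\to H$ to be (the unitary pullback of) $T_0$ on the $n_0$-th summand of $H_{cont}$ and the identity on the orthogonal complement $H\ominus L^2(\R,d\mu_{n_0})$. The latter decomposition is $A$-invariant, so for any $x=x_0+x^\perp$ split accordingly we have
\[
\langle x, Ax\rangle = \langle x_0, A x_0\rangle + \langle x^\perp, A x^\perp\rangle,
\qquad
\langle Tx, A Tx\rangle = \langle T_0 x_0, A T_0 x_0\rangle + \langle x^\perp, A x^\perp\rangle,
\]
and by the form-preservation of $T_0$ the two right-hand sides coincide. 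Hence $T$ maps $E$ into $E$.

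It remains to verify that $T$ is a non-expansive bijection $E\to E$ that is not an isometry. Bijectivity on $E$ follows from bijectivity of $T_0$ on the smaller ellipsoid (preservation of the quadratic form in both directions); non-expansiveness follows because $\|Tx\|^2=\|T_0 x_0\|^2+\|x^\perp\|^2\le \|x_0\|^2+\|x^\perp\|^2=\|x\|^2$, using that $T_0$ is non-expansive and the two summands are orthogonal; and $T$ fails to be an isometry because $T_0$ does, i.e. there exists some $f$ with $\|T_0 f\|<\|f\|$, which gives a corresponding $x\in H$ with $\|Tx\|<\|x\|$ (and such $x$ can be rescaled into $E$). The only mild technical point is checking that the spectral-theoretic decomposition really produces at least one nontrivial atomless summand inside the positive-spectrum interval — but this is precisely the content of $\sigma_{cont}\neq\emptyset$ combined with the spectral representation. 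Therefore $E$ is not LEC-plastic.
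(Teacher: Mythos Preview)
Your proposal is correct and follows essentially the same approach as the paper: use Theorem~\ref{Th_decomp} to isolate $H_{cont}$, apply the spectral representation (Theorem~\ref{Spectral_Th}) to pick out one atomless summand, invoke Proposition~\ref{prop:2.7.} there, and extend by the identity on the orthogonal complement. You have simply filled in the details (form-preservation under the $A$-invariant splitting, non-expansiveness, failure of isometry) that the paper leaves implicit.
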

\begin{proof}
Indeed, due to the Theorem \ref{Th_decomp} we can split an operator into two pieces: the first piece corresponds to the continuous measure, the second piece corresponds to the rest.  Then we can apply Theorem \ref{Spectral_Th}  to the first piece and get that there is a spectral representation of $A|_{\sigma_{cont}}$. One may chose one of those parts where $A$ acts as multiplication on the independent variable and the construction described in the previous proposition allows us to obtain an operator $T$ which is non-expansive bijection, but not an isometry.
\end{proof}

The next three lemmas are, in fact, building blocks for the proof of Theorem \ref{main1}.  

\begin{lem}\label{lem}
Let $T\colon H \to H$ be a linear operator which maps $E$ bijectively onto itself. Then $T$ maps the whole of $H$ bijectively onto itself and  $T(S) = S$. If, moreover, $T$ is non-expansive on $E$, then $\|T\| \le 1$.
\end{lem}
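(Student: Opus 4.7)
The plan is to exploit the fact that $E$ is the closed unit ball of the equivalent Hilbert norm $\|x\|_A := \sqrt{\langle x, Ax\rangle}$; the two-sided bounds on $A$ guarantee that $E$ is bounded, convex, balanced, and absorbs every point of $H$, while its boundary admits the intrinsic description $x \in S \iff x \in E$ and $(1+\varepsilon)x \notin E$ for every $\varepsilon > 0$.

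First I would establish bijectivity of $T$ on all of $H$. For surjectivity, given $y \in H$, pick $\lambda > 0$ with $\lambda y \in E$ (possible since $E$ is absorbing); bijectivity of $T|_E$ then produces $x \in E$ with $Tx = \lambda y$, and linearity gives $T(\lambda^{-1}x) = y$. For injectivity, if $Tz = 0$ with $z \neq 0$, scale $z$ into $E$ to contradict injectivity of $T|_E$ (using $T(0) = 0$). Note that this step uses only linearity and absorbingness, no a priori continuity of $T$.

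Next I would prove $T(S) = S$ via the intrinsic characterization above. Take $x \in S$; certainly $Tx \in T(E) = E$. If $Tx$ were not in $S$, then $(1+\varepsilon)Tx = T((1+\varepsilon)x) \in E$ for some small $\varepsilon > 0$; since $T$ is a bijection on $H$ with $T(E) = E$ we get $T^{-1}(E) = E$, forcing $(1+\varepsilon)x \in E$---contradicting $x \in S$. Thus $T(S) \subseteq S$, and applying the same argument to $T^{-1}$ (which also bijects $E$ onto itself) gives the reverse inclusion.

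Finally, for the norm bound under the non-expansiveness hypothesis: for arbitrary $x \in H \setminus \{0\}$ choose $\lambda > 0$ so small that $\lambda x \in E$. Since $0 \in E$, $T(0) = 0$ by linearity, and $T$ is non-expansive on $E$,
\[
\|T(\lambda x)\| = \|T(\lambda x) - T(0)\| \le \|\lambda x - 0\| = \lambda \|x\|,
\]
and cancelling $\lambda$ yields $\|Tx\| \le \|x\|$, hence $\|T\| \le 1$. The only delicate point I anticipate is the bijectivity-on-$H$ step, because one must resist the temptation to invoke the open mapping theorem (which requires continuity not yet established); the absorbing/linear argument sidesteps this, after which the remaining parts are essentially bookkeeping.
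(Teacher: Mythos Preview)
Your argument is correct and is essentially the paper's own proof: both use that $E$ is absorbing to extend bijectivity to all of $H$ by scaling, and both obtain $\|T\|\le 1$ by the identical scale-then-divide trick. The only cosmetic difference is in the $T(S)=S$ step, where the paper packages your pointwise characterization as the set identity $S = E \setminus \bigcup_{0<t<1} tE$ and uses injectivity of $T$ to push the set difference through in a single line, rather than arguing the two inclusions separately via $T$ and $T^{-1}$.
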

\begin{proof}
$E$ is absorbing since it contains a ball (the spectrum is bounded from below by positive constant). 

The rest of the proof is as in the article \cite{Zav1}. For convenience of the reader we give it here.

$E$ is an absorbing set, so $H = \cup_{t > 0}tE$. By linearity $T$ is injective on every set $tE$, consequently it is injective on the whole $H$. Also, $T(H) = \cup_{t > 0}T(tE) = \cup_{t > 0}tE = H$ which gives the surjectivity on $H$. Finally, $S = E \setminus \cup_{t \in (0, 1)}tE$, so $T(S) = T(E) \setminus \cup_{t \in (0, 1)}T(tE) = E \setminus \cup_{t \in (0, 1)}tE = S$.  If, moreover, $T$ is non-expansive on $E$, then for every $x \in H$ there is a $t > 0$ such that $tx \in E$ and we have $\|T(tx)\| = \rho(T(0), T(tx)) \le \rho(0, tx) = \|tx\|$. It remains to divide by $t$ to obtain that $\|Tx\| \le \|x\|$ for all  $x \in H$.

\end{proof}

Before moving to the next result, let us introduce the following notation:
$$H_t=Ker(A-t).$$

\begin{lem}\label{lem_ind}
 Let $\sigma(A)=\sigma_{pp}(A)\subset (0,+\infty)$ and let the set of eigenvalues of an operator $A$ contain the minimal element $r$  and let $r$ have finite multiplicity. Let $T\colon H \to H$ be a linear operator  which maps $E$ bijectively onto itself and whose restriction on $E$ is non-expansive. Then $T(H_r) = H_r$, $T\left(H_r \cap E \right) = H_r \cap E$ and the restriction of $T$ onto $H_r$ is a bijective isometry.
\end{lem}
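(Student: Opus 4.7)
The plan is to exhibit a metric characterisation of the sphere $S_r := \{x \in H_r : \|x\| = 1/\sqrt{r}\}$ as the norm-maximal layer of $E$, and then exploit the finite-dimensionality of $H_r$. By Theorem~\ref{thm 10} applied to $A$, $\langle x, Ax\rangle \ge r\|x\|^{2}$ for every $x \in H$, so $x \in E$ forces $\|x\| \le 1/\sqrt r$; by Theorem~\ref{thm 11}, equality here is attained exactly when $x \in H_r$ and $r\|x\|^{2}=1$. Thus $\{x\in E:\|x\|=1/\sqrt r\}=S_r$, i.e., $S_r$ is described purely in terms of $E$ and the Hilbert norm, and $H_r\cap E$ is the Euclidean ball in $H_r$ of radius $1/\sqrt r$.

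Next I would show $T^{-1}(S_r) \subseteq S_r$. Lemma~\ref{lem} ensures that $T$ is a bijection of the whole of $H$ (so $T^{-1}$ is well defined) and that $\|T\|\le 1$. Given $y \in S_r$, set $x = T^{-1}y \in E$; non-expansiveness gives $\|y\|=\|Tx\|\le\|x\|\le 1/\sqrt r=\|y\|$, forcing $\|x\|=1/\sqrt r$, so $x \in S_r$ by Step~1. Linearity propagates this to the whole subspace: any nonzero $y\in H_r$ has the form $t y_{0}$ with $t>0$ and $y_{0}\in S_r$, whence $T^{-1}y=tT^{-1}y_{0}\in tS_r\subseteq H_r$. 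Therefore $T^{-1}(H_r)\subseteq H_r$, equivalently $H_r\subseteq T(H_r)$. Since $T$ is injective and $\dim H_r<\infty$ by the finite multiplicity of $r$, the dimension count forces the equality $T(H_r)=H_r$.

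To conclude, I would use that $T$ is a bijection of $H$ preserving both $H_r$ and $S$ (Lemma~\ref{lem}) to obtain $T(S_r)=T(H_r\cap S)=T(H_r)\cap T(S)=H_r\cap S=S_r$. Hence $\|Tx\|=1/\sqrt r=\|x\|$ for every $x\in S_r$, and by homogeneity $T|_{H_r}$ is a bijective isometry of $H_r$; the same bijection property of $T$ yields $T(H_r\cap E)=T(H_r)\cap T(E)=H_r\cap E$. The main subtlety is the use of the finite multiplicity hypothesis, which is exactly what upgrades the soft inclusion $H_r\subseteq T(H_r)$ to equality via the dimension argument. Without it, Proposition~\ref{main_ex} produces non-isometric $T$, so this step is unavoidable and pinpoints where the hypothesis enters.
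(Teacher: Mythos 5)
Your proposal is correct and follows essentially the same route as the paper: both use Theorem~\ref{thm 10} to bound $\|x\|\le 1/\sqrt r$ on $E$, Theorem~\ref{thm 11} to identify the maximal-norm points with the eigenvectors for $r$, non-expansiveness plus Lemma~\ref{lem} to get $T^{-1}(H_r)\subseteq H_r$, and finite-dimensionality of $H_r$ to upgrade this to equality and to the isometry statement. The only cosmetic difference is that you characterise the extremal sphere inside $E$ directly, whereas the paper phrases the same argument via $T^{-1}(S)=S$.
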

\begin{proof}
Theorem \ref{thm 10} implies that $r=\inf_{||x||=1}\left\langle     x,Ax\right\rangle=\inf_{x\in H,~x\neq 0} \frac{\left\langle x,Ax\right\rangle}{||x||^2}$.

Recall that for $x\in S$, we have $\left\langle x,Ax\right\rangle=1$. Hence, $r=\inf_{x\in H,~x\neq 0} \frac{\left\langle x,Ax\right\rangle}{||x||^2}=\inf_{x\in S} \frac{\left\langle x,Ax\right\rangle}{||x||^2}=\inf_{x\in S} \frac{1}{||x||^2}$ (second equality: renormalization using $\left\langle x,Ax\right\rangle$ instead of norm). That is, for $x\in S$ we have $||x||\leq \sqrt{\frac{1}{r}}$. 

Let $x\in S$ with $||x||= \sqrt{\frac{1}{r}}$ (i.e. by Theorem \ref{thm 11} it is an eigenvector corresponding to $r$. Moreover, all eigenvectors can be written in this form after a suitable renormalisation). Since $T$ is non-expansive on $E$, we have $||T^{-1}(x)||\geq ||x||=\sqrt{\frac{1}{r}}$ and using  $T^{-1}(S)=S$, we get $||T^{-1}(x)||=\sqrt{\frac{1}{r}}$. Hence by Theorem \ref{thm 11} $T^{-1}(x)$ is an eigenvector corresponding to $r$. Hence, $T^{-1}(H_r)\subset H_r$.

Note that $H_r$ is finite dimensional, hence $T^{-1}|_{H_r}:H_r\mapsto H_r$ is surjective iff $T^{-1}:H_r\mapsto H_r$ is injective.  \ifshort We will prove injectivity from contadiction. Let $T^{-1}(x)=0$, for some $x\in H_r$ such that $x\neq 0$. Then we can rescale $x$ such that $<x,Ax>=1$, this would be contradiction to the fact that $T^{-1}:E\mapsto E$ is bijection.\else Injectivity of $T^{-1}|_{H_r}$ follows from Lemma \ref{lem} (i.e. injectivity of $T^{-1}$). \fi  
Hence $T^{-1}|_{H_r}:H_r\mapsto H_r$ is bijection and $T^{-1}(H_r)= H_r$ (i.e. $T(H_r)= H_r$). Hence also $T(H_r\cap E)= H_r\cap E$ (combining $T(H_r)= H_r$ with $T(E)=E$).

The last claim we obtain using linearity and the fact that for all eigenvectors $x$ corresponding to $r$ with norm $\sqrt{\frac{1}{r}}$, we have that  $||T^{-1}(x)||=\sqrt{\frac{1}{r}}$. (We can also observe that $S\cap H_r$ is a sphere in $H_r$ and map $T$ from $S\cap H_r$ onto $S\cap H_r$ is bijective.)

\ifshort Since $H_r\cap E$ is equal to the closed ball centered in 0 in $H_r$, by linearity we get that $T|_{H_r}$ maps the unit ball of $H_r$ bijectively onto itself and thus $T|_{H_r}$ is bijective isometry .\else \fi

\end{proof}

\begin{lem}\label{lem_ind2}
 Let $\sigma(A)=\sigma_{pp}(A)\subset (0,+\infty)$ and let the set of eigenvalues of an operator $A$ contain the maximal element $R$  and let $R$ have finite multiplicity. Let $T\colon H \to H$ be a linear operator  which maps $E$ bijectively onto itself and whose restriction on $E$ is non-expansive. Then $T(H_R) = H_R$, $T\left(H_R \cap E \right) = H_R \cap E$ and the restriction of $T$ onto $H_R$ is a bijective isometry.
\end{lem}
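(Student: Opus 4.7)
The plan is to mirror the argument of Lemma \ref{lem_ind}, but with the roles of $T$ and $T^{-1}$ reversed, exploiting that the maximum eigenvalue corresponds to the \emph{minimal}-norm points of $S$, whereas the minimum eigenvalue corresponded to the \emph{maximal}-norm points. Concretely, Theorem \ref{thm 10} gives $R=\sup\{\sigma(A)\}=\sup_{\|x\|=1}\langle x,Ax\rangle=\sup_{x\neq 0}\langle x,Ax\rangle/\|x\|^2$. For $x\in S$ we have $\langle x,Ax\rangle=1$, so this says $R=\sup_{x\in S}1/\|x\|^2$, i.e.\ every $x\in S$ satisfies $\|x\|\ge \sqrt{1/R}$. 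The analogue of Theorem \ref{thm 11} for the supremum (applied e.g.\ to $-A$, whose spectrum has infimum $-R$) tells us that equality $\|x\|=\sqrt{1/R}$ holds on $S$ precisely for eigenvectors of $A$ associated with $R$ (and every eigenvector for $R$ admits such a renormalisation).

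First I would take an arbitrary $x\in H_R$ normalised so that $x\in S$, i.e.\ $\|x\|=\sqrt{1/R}$. Since $T$ is non-expansive on $E$, Lemma \ref{lem} yields $\|T(x)\|\le\|x\|=\sqrt{1/R}$. On the other hand, by Lemma \ref{lem} again, $T(S)=S$, so $T(x)\in S$ and therefore $\|T(x)\|\ge\sqrt{1/R}$. Combining both inequalities gives $\|T(x)\|=\sqrt{1/R}$, which by the variational characterisation above forces $T(x)$ to be an eigenvector for $R$. Hence $T(H_R)\subset H_R$.

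Next I would use that $H_R$ is finite-dimensional by hypothesis. The restriction $T|_{H_R}\colon H_R\to H_R$ is injective because $T$ is injective on all of $H$ (Lemma \ref{lem}); in finite dimensions injectivity implies surjectivity, so $T(H_R)=H_R$. Combining this with $T(E)=E$ (Lemma \ref{lem}) yields $T(H_R\cap E)=H_R\cap E$.

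Finally, by Theorem \ref{thm 11} (and its supremum counterpart) the slice $H_R\cap E$ coincides with the closed ball in $H_R$ of radius $\sqrt{1/R}$ centred at the origin, since on $H_R$ we have $\langle x,Ax\rangle=R\|x\|^2$. Thus $T|_{H_R}$ is a linear bijection of a finite-dimensional Hilbert space that maps a ball around the origin onto itself, hence an isometry. The only mildly subtle point, compared with Lemma \ref{lem_ind}, is the direction of the inequality: here we invoke non-expansivity of $T$ directly (rather than expansivity of $T^{-1}$) because $R$-eigenvectors are norm \emph{minimisers} on $S$, so we need $T$ to contract these already-short vectors back onto $S$.
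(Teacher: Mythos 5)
Your proof is correct and takes essentially the same approach as the paper, whose own proof of this lemma is just the one-line remark that it is ``similar to the previous one''; your write-up is exactly that dual argument, correctly noting that $R$-eigenvectors are the norm \emph{minimisers} on $S$ so that non-expansivity of $T$ (rather than of $T^{-1}$) pins down $\|Tx\|=\sqrt{1/R}$. No gaps.
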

\begin{proof}
The proof is similar to the previous one (alternatively consider $T^{-1}$ instead of $T$ and apply the previous result).

\end{proof}

The proof of the next theorem partially repeats the proof of Theorem 1 in \cite{Zav1}. To make this work self-contained we provide all the details here.

\begin{theo}\label{main1}
Let  $A$ be a bounded self-adjoint operator. Then an ellipsoid $E$ generated by $A$ is LEC-plastic if and only if the following two conditions hold:
\begin{enumerate}
    \item $\sigma_{cont}=\emptyset$;
    \item every subset of $\sigma_p(A)$ that consists of more than one element  either has a maximum of finite multiplicity or has a minimum of finite multiplicity.
\end{enumerate}

\end{theo}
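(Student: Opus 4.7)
The plan is to prove the two directions separately. For the ``only if'' direction I would argue the contrapositive: if $\sigma_{cont}\ne\emptyset$, then the corollary following Proposition~\ref{prop:2.7.} already produces a non-expansive linear bijection of $E$ that is not an isometry; and if condition (2) fails, then some $B\subseteq\sigma_p(A)$ with $|B|\ge 2$ has neither a finite-multiplicity minimum nor a finite-multiplicity maximum, which is exactly the hypothesis of Proposition~\ref{main_ex}. Either way, $E$ fails to be LEC-plastic.

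For the substantial ``if'' direction, fix a linear $T\colon H\to H$ whose restriction to $E$ is a non-expansive bijection $E\to E$. By Lemma~\ref{lem}, $T$ is a bijection of $H$ with $\|T\|\le 1$ and $T(S)=S$. Since $\sigma_{cont}=\emptyset$, one has the Hilbert-sum decomposition $H=\bigoplus_{\lambda\in\sigma_p(A)}H_\lambda$, so it suffices to show that $T$ preserves each eigenspace $H_\lambda$ and acts on it as an isometry; that will give $T$ an isometry on $H$, hence on $E$.

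I would accomplish this via Zorn's lemma on the family $\mathcal F$ of $C\subseteq\sigma_p(A)$ such that $T$ preserves both $W_C:=\bigoplus_{\lambda\in C}H_\lambda$ and $V_C:=W_C^\perp$, and $T|_{H_\lambda}$ is a bijective isometry for every $\lambda\in C$. The family contains $\emptyset$ and is closed under chains (routine: $W_{\bigcup C_i}=\overline{\bigcup W_{C_i}}$, on which $T$ is isometric since it respects the orthogonal Hilbert-sum structure of the eigenspaces, and $V_{\bigcup C_i}=\bigcap V_{C_i}$ is $T$-invariant with $T$-surjectivity inherited from bijectivity on $H$). Let $C^*$ be maximal. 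If $D:=\sigma_p(A)\setminus C^*$ has $|D|\ge 2$, condition (2) yields a finite-multiplicity minimum $r$ or maximum $R$ of $D$; since $A|_{V_{C^*}}$ has purely point spectrum $\overline D$ with the same extremum, Lemma~\ref{lem_ind} (or Lemma~\ref{lem_ind2}) applied with $(H,A,E,T)$ replaced by $(V_{C^*},A|_{V_{C^*}},E\cap V_{C^*},T|_{V_{C^*}})$ shows $T|_{H_r}$ (resp.\ $T|_{H_R}$) is a bijective isometry, giving $C^*\cup\{r\}\in\mathcal F$ and contradicting maximality. In the terminal case $|D|=1$, $V_{C^*}$ is a single eigenspace $H_{\lambda_0}$ and $E\cap V_{C^*}$ is a Hilbert-space ball, so $T|_{H_{\lambda_0}}$ and its inverse both send this ball into itself, both have operator norm at most one, and $T|_{H_{\lambda_0}}$ is forced to be an isometry; again $C^*\cup D\in\mathcal F$. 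Therefore $C^*=\sigma_p(A)$, which completes the argument.

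The main technical step hidden in the inductive move is verifying that once Lemma~\ref{lem_ind} has delivered $T(H_r)=H_r$ (inside $V_{C^*}$), the orthogonal complement of $H_r$ in $V_{C^*}$ is also $T$-invariant, so that $\mathcal F$ actually admits $C^*\cup\{r\}$. I would prove this by contradiction: for $x\in S\cap H_r^\perp$, write $Tx=y_1+y_2$ with $y_1\in H_r$, $y_2\in H_r^\perp$, and suppose $y_1\ne 0$. Using surjectivity of $T|_{H_r}$, pick $z_1\in H_r$ with $Tz_1=y_1$, so that $T(x-z_1)=y_2$. Since $\langle x-z_1,A(x-z_1)\rangle=r\|z_1\|^2+1$, the vector $(x-z_1)/\sqrt{r\|z_1\|^2+1}$ belongs to $S$, and its $T$-image $y_2/\sqrt{r\|z_1\|^2+1}$ also lies in $T(S)=S$, yielding $\langle y_2,Ay_2\rangle=r\|z_1\|^2+1$. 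Combining this with $\langle Tx,ATx\rangle=r\|y_1\|^2+\langle y_2,Ay_2\rangle=1$ gives $r(\|y_1\|^2+\|z_1\|^2)=0$, forcing $y_1=0$ and delivering the required contradiction.
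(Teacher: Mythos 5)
Your argument is correct, and while your ``only if'' half coincides with the paper's (Proposition~\ref{main_ex} for a failure of condition (2), the corollary following Proposition~\ref{prop:2.7.} for $\sigma_{cont}\neq\emptyset$), your ``if'' half takes a genuinely different route. The paper first establishes a combinatorial claim: there is a threshold $\tau$ splitting $\sigma_p(A)$ into a part $A^+$ well-ordered by $\geq$, a part $A^-$ well-ordered by $\leq$, and at most one (possibly infinite-multiplicity) eigenvalue $\tau$; it then runs two transfinite inductions, always peeling off the maximum of what remains of $A^+$ (via Lemma~\ref{lem_ind2}) and the minimum of what remains of $A^-$ (via Lemma~\ref{lem_ind}), and finally treats $H_\tau$ as the residual orthocomplement, whose intersection with $E$ is a ball. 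Your Zorn's-lemma formulation absorbs all of this into a single maximality argument: condition (2) applied to the leftover set $D$ adaptively hands you a finite-multiplicity extremum to peel off, so the threshold $\tau$ and the well-ordering claim are never needed, and your terminal case $|D|=1$ subsumes the paper's treatment of $H_\tau$ by the same linear-bijection-of-a-ball observation. The other difference is how invariance of the orthogonal complement of the peeled-off eigenspace is obtained: the paper notes that $T$, being a linear bijection of the unit ball of the norm $\sqrt{\left\langle x,Ax\right\rangle}$, is unitary for the modified scalar product $\left\langle x,Ay\right\rangle$ and hence preserves orthogonality, which for sums of eigenspaces coincides with ordinary orthogonality; your computation with $Tx=y_1+y_2$ proves the same fact by hand and checks out. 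Two small points you should make explicit: the computation only yields $T\bigl(H_r^\perp\cap V_{C^*}\bigr)\subseteq H_r^\perp\cap V_{C^*}$, so for equality run the identical argument for $T^{-1}$ (which also satisfies $T^{-1}(S)=S$ and $T^{-1}(H_r)=H_r$) or quote the unitarity argument; and in the chain step, note that $T$ is isometric on finite sums of the $H_\lambda$, $\lambda\in C$, because the images $T(H_\lambda)=H_\lambda$ stay mutually orthogonal, so the isometry passes to the closure $W_C$ and $T(W_C)=W_C$ follows from completeness.
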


\begin{proof} We only need to prove the "if" part of the statement. 
Note that under our assumptions the spectrum $\sigma(A)=\overline{\sigma_{p}(A)}=\sigma_{pp}(A)$ (i.e. consists of eigenvalues and their limiting points). Moreover $A$ cannot contain more than one element of infinite multiplicity.

Note that in this case there exists a basis of eigenvectors of $A$ (see Problem 3.26 in \cite{Teschl}).

\textit{Claim 1.} There is a $\tau > 0$ such that $A^+ =\sigma_{p}(A)\cap(\tau, +\infty)$ is well-ordered with respect to the
ordering $\geq$ (that is every not empty subset of $A^+$ has a maximal element), $A^{-}=\sigma_{p}(A)\cap(0,\tau)$
is well-ordered with respect to the ordering $\leq$ (that is every not empty subset of $A^-$ has a
minimal element), and neither $A^+$ nor $A^-$ contain elements of infinite multiplicity.

Indeed, if there is an element $a_\infty\in \sigma_{p}(A)$ of infinite multiplicity, let us take $\tau=a_\infty$. Let
us demonstrate that $(A^+, \geq)$ is well-ordered. If $A^+ = \emptyset$ the statement is clear. In the other
case for every not empty subset $D$ of $A^+$ consider $B = \{\tau\} \cup D$. Then the minimal element
of $B$ is $\tau$ , which has infinite multiplicity so $B$ must have a maximum of finite multiplicity.
This maximum will be also the maximal element of $D$. The demonstration of well ordering
for $(A^{-} , \leq)$ works in the same way.

Now, consider the remaining case of $\sigma_{p}(A)$ consisting only of finite multiplicity elements.
Consider the set $U$ of all those $t \in (0, +\infty)$ that $\sigma_{p}(A) \cap (t, +\infty)$ is not empty and well-ordered
with respect to the ordering $\geq$. If $U$ is not empty, take $\tau =\inf U$, if $U = \emptyset$, take $\tau = \sup \sigma_{p}(A)$.
Let us demonstrate that this $\tau$ is what we need. In the first case $A^+ = \sigma_{p}(A)\cap (\tau, +\infty)$ and for
every $t > \tau$ we have $\sigma_{p}(A) \cap (t, +\infty)$ is not empty and well-ordered with respect to the ordering $\geq$. This implies that $(A^{+}, \geq)$ is well-ordered. In the second case $A^+ = \emptyset$, which is also well-
ordered. So, it remains to demonstrate that $A^{-} = \sigma_{p}(A) \cap (0, \tau )$ is well-ordered with respect to the ordering $\leq$. Assume this is not true. Then, there is a non empty subset $B \subset A^{-}$ with no
minimal element. According to the conditions of our theorem $B$ has a maximal element $ b$.
Since $b < \tau$ and by definition of $\tau$ the set $\sigma_{p}(A) \cap (b, +\infty)$ is not well-ordered with respect
to the ordering $\geq$. Consequently, there is a non empty $D \subset \sigma_{p}(A)\cap (b, +\infty)$ with no maximal
element. Then, $B \cup D $ satisfies neither condition (1) nor condition (2) of our theorem. This
contradiction completes the demonstration of Claim 1.

We introduce the following three subspaces:
\begin{itemize}
    \item $H^-$ is the closed linear span of the set of all those eigenvectors, for which the corresponding eigenvalue lies in $A^-$;
    
    \item $H^\tau$ is the closed linear span of the set of all those eigenvectors, for which the corresponding eigenvalue is $\tau$ if $\tau$ is eigenvalue or empty otherwise; 
        
    \item $H^+$ is the closed linear span of the set of all those eigenvectors, for which the corresponding eigenvalue lies in $A^+$.
\end{itemize}

\ifshort
\[H^-=\overline{Lin}\{e_k|e_k \text{ is an eigenvector corresponding to } \lambda_k\in A^- \text{ according to multiplicity}\};\]
\[H^\tau=\overline{Lin}\{e_k|e_k \text{ is an eigenvector corresponding to } \tau \text{ if } \tau \text{ is an eigenvalue or empty otherwise}\};\]
\[H^+=\overline{Lin}\{e_k|e_k \text{ is an eigenvector corresponding to } \lambda_k\in A^+ \text{ according to multiplicity}\};\]
\else
\fi

Since eigenvectors of a self-adjoint operator corresponding to different eigenvalues are orthogonal, using continuity of scalar product, we get that these subspaces are mutually orthogonal and  $H=H^- \oplus H^\tau \oplus H^+$ (we have equality here since the set of all eigenvectors of $A$ spans $H$). Let $T:H\mapsto H$ be a linear operator which maps $E$
bijectively onto itself and whose restriction on E is non-expansive.

\ifshort
\textit{Claim 2} $T(H^-) = H^-$, $T(H^+) = H^+$ and the restrictions of $T$ onto $H^-$ and $H^+$ are
bijective isometries.

Let us start with $H^+$.

Changes in \cite{Zav}:

1) $H(t)=\overline{Lin}\{e_k|e_k \text{ is an eigenfunction corresponding to } \lambda_k\in A^+\cap[t,+\infty) \text{ according to multiplicity}\}$

2) Lemma 3 -> Lemma 3.6

3) Modified scalar product: $<<x,y>>=<x,Ay>$

4) Norm $|||x,y|||=\sqrt{<x,Ax>}$

5) $X=\cup_{t>t_0} H(t)=Lin\{e_k|e_k \text{ is an eigenfunctions corresponding to } \lambda_k\in A^+\cap(t_0,+\infty) \text{ according to multiplicity}\}$

6) $X^\perp=\overline{Lin}\{e_k|e_k \text{ is an eigenfunctions corresponding to } \lambda_k\in A^+\cap(0,t_0] \text{ according to multiplicity}\}$

7) $t_0$ maximal eigenvalue 

8) Lemma 3 -> Lemma 3.6

After claim 2:

1) I guess $X$ is $H^- \oplus H^+$ (?)

2) radius $\frac{1}{\sqrt{\tau}}$ (to check)
\else
\textbf{Claim 2}. \emph{$T(H^-) = H^-$, $T(H^+) = H^+$ and the restrictions of $T$ onto $H^-$ and $H^+$ are bijective isometries}.

We will demonstrate the part of our claim that speaks about $H^+$: the reasoning about $H^-$ will differ only in the usage of Lemma \ref{lem_ind} instead of  Lemma \ref{lem_ind2}.

Let us define a subspace $H(t)$ as the closed linear span of the set of all those eigenvectors, for which the corresponding eigenvalue lies in $A^+\cap[t,+\infty)$.

If $A^+ = \emptyset$ there is nothing to do. In the case of $A^+ \neq \emptyset$ we are going to demonstrate
by transfinite induction in $t \in (A_+, \geq)$ the validity for all $t \in A_+$ of the following statement $\frak U(t)$: the subspace $H(t)$ is $T$-invariant and $T $ maps $H(t)$ onto $H(t)$ isometrically. Since the collection of subspaces $H(t)$, $t \in A_+ $ is a chain whose union is dense
in $H_+$, the continuity of $T$ will imply the desired Claim 2.

The base of induction is the statement  $\frak U(t)$ for $t = \max A$. This is just the statement of Lemma \ref{lem_ind2}. We assume now as inductive hypothesis the validity of $\frak U(t)$ for all $t > t_0 \in A^+$, and our goal is to prove the statement $\frak U(t_0)$.
For every $x,y$ in $H$ let us introduce a modified scalar product $\scal{x,y}$ as follows:
$$
\scal{x,y}=\left\langle x,Ay\right\rangle.
$$
Then the norm on $H$ induced by this modified scalar product is
$$
\vertiiis{x}=\sqrt{\scal{x,x}}=\sqrt{\left\langle x,Ax\right\rangle}.
$$
The ellipsoid $E$ is the unit ball in this new norm and since $T$ is linear and maps $E$ onto $E$ bijectively, $T$ is a bijective isometry of $\left(H,  \vertiiis{\cdot}\right)$ onto itself.
Due to \cite[Theorem 2, p. 353]{Kad} $T$ is a unitary operator in the modified scalar product and thus $T$ preserves the modified scalar product. In particular, it preserves the orthogonality in the modified scalar product.
Denote 

\ifshort
$$X = \bigcup_{t > t_0}H(t) = \spn\{e_k|e_k \text{ are the eigenvectors corresponding to } \lambda_k\in A^+\cap(t_0,+\infty)\}.$$

The orthogonal complement to $X$ in the modified scalar product is 
$$X^\perp =  \overline{\spn}\{e_k|e_k \text{ are the eigenvectors corresponding to } \lambda_k\in A^+\cap(0,t_0]\}.$$

\else
$$X = \bigcup_{t > t_0}H(t).$$

In other words, $X$ is the linear span of the set of all those eigenvectors, for which the corresponding eigenvalue lies in $A^+\cap(t_0,+\infty)$.

The orthogonal complement to $X$ in the modified scalar product $X^\perp$ is the closed linear span of the set of all those eigenvectors, for which the corresponding eigenvalue lies in $A^+\cap(0,t_0].$
\fi

Occasionally the orthogonal complement to $X$ in the original scalar product is the same.  Our  inductive hypothesis implies that $T(X) = X$, consequently $T(X^\perp) = X^\perp$ and  $T(X^\perp \cap E) = X^\perp \cap E$.

$X^\perp$ equipped with the original scalar product is a Hilbert space, $X^\perp \cap E$ is an ellipsoid in $X^\perp$, $t_0$ is the maximal eigenvalue of $A$  and the multiplicity of  $t_0$ is finite because  $t_0 \in A^+$. The application of Lemma \ref{lem_ind2} gives us that $T(H_{t_0}) = H_{t_0}$  and the restriction of $T$ onto $H_{t_0}$ is a bijective isometry in the original norm. Now, $T$ maps $X$ onto $X$ isometrically, maps $H_{t_0}$  onto $H_{t_0}$ isometrically and $H(t_0)$ is the orthogonal direct sum of subspaces $H_{t_0}$ and the closure of $X$. This implies that $T$ maps $H(t_0)$ onto $H(t_0)$ isometrically, and the inductive step is done. This completes the demonstration of Claim 2.

From Claim 2 and mutual orthogonality of $H^-$ and $H^+$ we deduce that  $T(H^- \oplus H^+) = H^- \oplus H^+$ and $T$ is an isometry on  $H^- \oplus H^+$. Recalling again that $T$ preserves the modified scalar product and the fact that the orthogonal complement to $X$ in the modified scalar product is $H_\tau$ we obtain that $T(H_\tau) = H_\tau$  and consequently $T(H_\tau \cap E) = H_\tau \cap E$. But $H_\tau \cap E$ is equal to the closed ball (in the original norm) of radius $\frac{1}{\sqrt{\tau}}$ centered at 0, so the equality $T(H_\tau \cap E) = H_\tau \cap E$ and linearity of $T$ implies that $T$ is an isometry on $H_\tau$. Finally, as we know, $H = H^- \oplus H_\tau \oplus H^+$, so $T$ is an isometry on the whole $H$.
\fi
\end{proof}

\textbf{Acknowledgement}. The authors are grateful to Vladimir
Kadets and Gerald Teschl for constant support and useful advices. The second author was supported  by the  National Research Foundation of Ukraine funded by Ukrainian State budget in frames of the project 2020.02/0096 ``Operators in infinite-dimensional spaces:  the interplay between geometry, algebra and topology''.

\bibliographystyle{amsplain}
\end{document}